\newcommand{\R}{{\mat R}}
\newcommand{\N}{{\mat N}}
\newcommand{\Sp}{{\mat S}}
\newcommand{\ds}{\displaystyle}
\newcommand{\no}{\nonumber}
\newcommand{\be}{\begin{eqnarray}}
\newcommand{\ben}{\begin{eqnarray*}}
\newcommand{\en}{\end{eqnarray}}
\newcommand{\enn}{\end{eqnarray*}}
\newcommand{\pa}{\partial}
\newcommand{\ov}{\overline}
\newcommand{\I}{{\rm Im}}
\newcommand{\Rt}{{\rm Re}}
\newcommand{\curl}{{\rm curl\,}}
\newcommand{\dive}{{\rm div\,}}
\newcommand{\g}{\gamma}
\newcommand{\G}{\Gamma}
\newcommand{\vep}{\varepsilon}
\newcommand{\Om}{\Omega}
\newcommand{\om}{\omega}
\newcommand{\sig}{\sigma}
\newcommand{\al}{\alpha}
\newcommand{\wi}{\widehat}
\newcommand{\ti}{\times}
\newcommand{\wid}{\widetilde}
\newcommand{\mat}{\mathbb}
\newcommand{\se}{\setminus}
\newcommand{\ify}{\infty}
\newcommand{\la}{\lambda}
\newcommand{\tr}{\triangle}
\newtheorem{theorem}{Theorem}[section]
\newtheorem{lemma}[theorem]{Lemma}
\newtheorem{corollary}[theorem]{Corollary}
\newtheorem{definition}[theorem]{Definition}
\newtheorem{remark}[theorem]{Remark}
\begin{document}
\renewcommand{\theequation}{\arabic{section}.\arabic{equation}}
\begin{titlepage}
\title{\bf
Uniqueness in inverse acoustic and electromagnetic scattering by penetrable obstacles with embedded objects
}
\author{Jiaqing Yang\thanks{School of Mathematics and Statistics, Xi'an Jiaotong University,
Xi'an, Shaanxi, 710049, China ({\tt jiaq.yang@mail.xjtu.edu.cn,})}
\and Bo Zhang\thanks{LSEC and Academy of Mathematics and Systems Sciences, Chinese Academy of Sciences,
Beijing, 100190, China and School of Mathematical Sciences, University of Chinese Academy of Sciences,
Beijing 100049, China ({\tt b.zhang@amt.ac.cn})}
\and Haiwen Zhang\thanks{Academy of Mathematics and Systems Science, Chinese Academy of Sciences,
Beijing 100190, China ({\tt zhanghaiwen@amss.ac.cn})}}
\date{}
\end{titlepage}
\maketitle

\vspace{.2in}

\begin{abstract}
This paper considers the inverse problem of scattering of time-harmonic acoustic and
electromagnetic plane waves by a bounded, inhomogeneous, penetrable obstacle with embedded objects inside.
A new method is proposed to prove that the inhomogeneous penetrable obstacle
can be uniquely determined from the far-field pattern at a fixed frequency, disregarding its contents.
Our method is based on constructing a well-posed interior transmission problem in a {\em small} domain
associated with the Helmholtz or modified Helmholtz equation and the Maxwell or modified Maxwell equations.
A key role is played by the smallness of the domain which ensures that the lowest transmission eigenvalue
is large so that a given wave number $k$ is not an eigenvalue of the interior transmission problem.
Another ingredient in our proofs is a priori estimates of solutions to the transmission scattering problems
with data in $L^p$ ($1<p<2$), which are established in this paper by using the integral equation method.
A main feature of the new method is that it can deal with the acoustic and electromagnetic cases
in a unified way and can be easily applied to deal with inverse scattering by unbounded rough interfaces.

\vspace{.2in}
{\bf Keywords:} Uniqueness, inverse scattering problem, far-field pattern, penetrable obstacle,
transmission problem, interior transmission problem, embedded obstacles.
\end{abstract}

\section{Introduction}
\setcounter{equation}{0}

Consider the problem of scattering of a time-harmonic acoustic or electromagnetic wave by an inhomogeneous
penetrable obstacle containing possibly some embedded objects and surrounded by a homogeneous background medium.
This problem occurs in various applications such as radar and sonar, remote sensing, geophysics, medical
imaging and nondestructive testing.

Denote by $D$ the bounded penetrable obstacle in $\R^3$ which may contain certain embedded objects
denoted by $D_b$, that is, $D_b\Subset D$. Here, $D$ is assumed to be an open bounded domain
in $\R^3$ with a smooth boundary $\pa D\in C^2$ and $D_b=\bigcup_{j=1}^m D_b^{(j)}$
with $D_b^{(j_1)}\cap D_b^{(j_2)}=\emptyset$ if $j_1\not= j_2$.
Denote by $n(x)$ the refractive index characterizing the inhomogeneous medium in $\R^3\se\ov{D_b}$ and
assume that $n=1$ in $\R^3\se\ov{D}$, $n\in L^\infty(D\se\ov{D}_b)$, $\Rt(n)>0$ and $\I(n)\geq 0$
in $D\se\ov{D}_b$. Then the scattering problem of a time-harmonic plane acoustic wave is modeled by
the transmission problem
\be\label{1.1}
\Delta u+k^2u=0 && \mbox{in}\;\;\R^3\se\ov{D},\\ \label{1.2}
\Delta v+k^2nv=0 && \mbox{in}\;\;D\se\ov{D}_b,\\ \label{1.3}
u-v=0,\quad\frac{\pa u}{\pa\nu}-\la\frac{\pa v}{\pa\nu}=0 &&\mbox{on}\;\;\pa D,\\ \label{1.4a}
\mathscr{B} (v)=0 &&\mbox{on}\;\;\pa D_b,\\ \label{1.4}
\lim_{r\rightarrow\infty}r\Big(\frac{\pa u^s}{\pa r}-iku^s\Big)=0 && r=|x|,
\en
where $\la>0$ is the transmission coefficient depending on the properties of the media
in $D\se\ov{D}_b$ and $\R^3\se\ov{D}$, $u:=u^i+u^s$ denotes the total field in $\R^3\se\ov{D}$,
$u^i=e^{ikx\cdot d}$ is the incident plane wave, $u^s$ is the scattered wave, and $\nu$ is
the unit normal on $\pa D$ directed into the exterior of $D$.
Here, the wave number $k>0$ is given by $k=\omega/c$ with the frequency $\omega>0$ and the sound
speed $c>0$ and $d\in\Sp^2$ is the incident direction.
$\mathscr{B}$ denotes the boundary condition imposed on the boundary $\pa D_b$ of the embedded
obstacle $D_b$ satisfying that $\mathscr{B}(v):=v$ if $D_b$ is a sound-soft obstacle,
and $\mathscr{B}v:=\pa_\nu v$ with the outward normal $\nu$ directing into $D\se\ov{D_b}$ if $D_b$
is a sound-hard obstacle. Further, we have $\mathscr{B}v:=\pa_\nu v+\beta v$ on an open subset
$\G\subset\pa D_b$ (with the impedance coefficient $\beta$ such that $\I(\beta)\ge0$) and
$\mathscr{B}v:=v$ on $\pa D_b\se\ov{\G}$ if $D_b$ is a mixed-type obstacle.

In the case $\la=1$ the scattering problem (\ref{1.1})-(\ref{1.4}) becomes
\be\label{1.1+}
\Delta u+k^2nu=0 && \mbox{in}\;\;\R^3\se\ov{D_b},\\ \label{1.4a+}
\mathscr{B}(v)=0 &&\mbox{on}\;\;\pa D_b,\\ \label{1.4+}
\lim_{r\rightarrow\infty}r\Big(\frac{\pa u^s}{\pa r}-iku^s\Big)=0 && r=|x|,
\en
which is also called {\em the medium scattering problem with embedded objects}.

The condition (\ref{1.4}) (or (\ref{1.4+})) is referred to the {\em Sommerfeld radiation condition}
which allows the following asymptotic behavior of the scattered field $u^s$:
\be\label{1.5}
u^s(x;d)=\frac{e^{ik|x|}}{4\pi|x|}\left\{u^{\infty}(\wi{x};d)+O\Big(\frac{1}{|x|}\Big)\right\},
\qquad |x|\rightarrow\infty
\en
uniformly in all directions $\wi{x}=x/|x|\in\Sp^2$, where $u^\infty$ is defined on the
unit sphere $\Sp^2$ and known as the far field pattern of the scattered field $u^s$.

We also consider the problem of scattering of a time-harmonic electromagnetic plane wave
by the inhomogeneous penetrable obstacle $D$ with the embedded obstacle $D_b$ and surrounded
by a homogeneous background medium. This problem can be formulated as follows:
\be\label{1.6}
\curl E-ikH=0,\quad \curl H+ikE=0 && \mbox{in}\;\;\R^3\se\ov{D},\\ \label{1.7}
\curl G-ikF=0,\quad \curl F+ikn(x)G=0 && \mbox{in}\;\;D\se\ov{D_b},\\ \label{1.8}
\nu\times E-\nu\times G=0,\quad
\nu\times H-\la_H\nu\times F=0 && \mbox{on}\;\;\pa D,\\ \label{1.9a}
\mathscr{B}_E (G)=0  && \mbox{on}\;\;\pa D_b,\\ \label{1.9}
\lim_{r\rightarrow\infty}r(H^s\ti\wi{x}-E^s)=0 && r=|x|,
\en
where $E,G$ are the electric fields, $H,F$ are the magnetic fields,
$E=E^i+E^s$ and $H=H^i+H^s$ in $\R^3\se\ov{D}$ with the incident plane wave
\ben
E^i(x)=\frac{i}{k}\curl\curl pe^{ikx\cdot d},\qquad
H^i(x)=\frac{1}{ik}\curl E^i(x).
\enn
Here, $\la_H=\mu_0/\mu_1$, $k^2=\om^2\vep_0\mu_0$ is the wave number,
$n=(\vep_1+i\sig_1/\om)\mu_1/(\vep_0\mu_0)$ is the refractive index of the inhomogeneous medium in
$D\se\ov{D}_b$ with electric permittivity $\vep_1,$ magnetic permeability $\mu_1$
and electric conductivity $\sig_1\ge0$ differing from the electric permittivity $\vep_0,$
magnetic permeability $\mu_0$ and electric conductivity $\sig_0=0$ of the surrounding medium $\R^3\se\ov{D}$,
$d$ is the incident direction and $p$ is the polarization vector.
Similarly, $\mathscr{B}_E$ denotes the boundary condition on $\pa D_b$, which corresponds to a
perfect conductor condition if $\mathscr{B}_E (G):=\nu\times G=0$ and an impedance condition
if $\mathscr{B}_E(G):=\nu\times \curl G-i\rho_E(\nu\times G)\times\nu=0$ with a positive constant
$\rho_E$.
In addition, the condition (\ref{1.9}) is known as the {\em Silver-M\"{u}ller radiation condition},
which leas to the asymptotic behaviors:
\be\label{1.0}
E^s(x)&=&\frac{e^{ik|x|}}{|x|}\left\{E^\infty(\wi{x};d;p)+O\Big(\frac{1}{|x|}\Big)\right\},
\qquad |x|\rightarrow\infty\\
H^s(x)&=&\frac{e^{ik|x|}}{|x|}\left\{H^\infty(\wi{x};d;p)+O\Big(\frac{1}{|x|}\Big)\right\},
\qquad |x|\rightarrow\infty
\en
uniformly for all $\wi{x}=x/|x|\in\Sp^2$, where $E^\infty$ and $H^\infty(=\wi{x}\times E^\infty)$
defined on $\Sp^2$ are called the far field patterns of the electric field $E^s$ and the magnetic
field $H^s$, respectively.

In the case $\la_H=1$ we consider {\em the medium scattering problem without embedded obstacles}:
\be\label{1.6+}
\curl E-ikH=0,\quad \curl H+ikn(x)E=0 && \mbox{in}\;\;\R^3,\\ \label{1.9+}
\lim_{r\rightarrow\infty}r(H^s\ti\wi{x}-E^s)=0 && r=|x|,
\en
where $E=E^i+E^s$ and $H=H^i+H^s$ in $\R^3$, $n=1$ in $\R^3\se\ov{D}$ and
$n=(\vep_1+i\sig_1/\om)\mu_1/(\vep_0\mu_0)$ in $D$ with $n\in L^\infty(D)$.

The existence of a unique solution to the transmission scattering problems (\ref{1.1})-(\ref{1.4})
and (\ref{1.6})-(\ref{1.9}) (or the medium scattering problems (\ref{1.1+})-(\ref{1.4+}) and
(\ref{1.6+})-(\ref{1.9+})) can be established by the variational approach
or the integral equation method \cite{LZ,LZY,LZH,LZaa,LZ12}.
In this paper, we will assume that the transmission scattering problems (\ref{1.1})-(\ref{1.4})
and (\ref{1.6})-(\ref{1.9}) (or the medium scattering problems (\ref{1.1+})-(\ref{1.4+}) and
(\ref{1.6+})-(\ref{1.9+})) are well-posed and study the {\em inverse scattering problem:}
given $k$, $\la$ or $\la_H$, determine the penetrable obstacle $D$ (or the support $D$ of the inhomogeneous
medium in the case $\la=1$ or $\la_H=1$) from a knowledge of $u^\infty(\wi{x};d)$ or $E^\infty(\wi{x};d;p)$
for all $\wi{x},d\in\Sp^2$ and $p\in\R^3$, disregarding its contents $n$ and $D_b$.

In the case $D_b=\emptyset$, many uniqueness results have been obtained in determining the penetrable obstacle
$D$. The first such uniqueness result was established by Isakov \cite{IV} in 1990.
The idea is to construct singular solutions of the scattering problem with respect to
two different penetrable obstacles with identical far-field patterns, based on the variational method.
In 1993, Kirsch and Kress \cite{KK} greatly simplified Isakov's method by considering classical scattering 
solutions and using the integral equation technique to establish a priori estimates of the solution on some 
part of the interface $\pa D$. In \cite{KK} the method was also extended to the case of Neumann boundary 
conditions (corresponding to impenetrable, sound-hard obstacles).
Since then, the idea has been extensively studied and applied to establish uniqueness results for
many other inverse scattering problems with transmission or conductive boundary conditions as well as other 
boundary conditions (see, e.g., \cite{GK96,HF,IV08,KP,LZH,LZaa,LZ12,MM,V04,YZ} and the references quoted there).
The idea of Isakov has also been modified to establish uniqueness results for inverse electromagnetic
scattering problems both by a penetrable, inhomogeneous, isotropic obstacle $D$ in \cite{HP0}
under the condition that the boundary $\pa D$ is in $C^{2,\al}$ with $0<\al<1,$
the refractive index $n\in C^{1,\al}(\ov{D})$ is a constant near the boundary $\pa D$
and $\I(n(x_0))>0$ for some $x_0\in D$ and by a penetrable, homogeneous, isotropic obstacle coated
with a thin conductive layer in \cite{HF1}.
In \cite{HP1}, H\"ahner introduced a different technique to prove the unique determination of a penetrable, 
inhomogeneous, anisotropic obstacle $D$ from a knowledge of the scattered near-fields for all incident 
plane waves. The method of H\"ahner is based on a study of the existence, uniqueness and regularity of
solutions to the corresponding interior transmission problem in $D$.
In \cite{CC0} Cakoni and Colton extended H\"ahner's idea to deal with the case with a penetrable,
inhomogeneous, anisotropic obstacle possibly partly coated with a thin layer of a highly
conductive material. It seems difficult to apply the idea in \cite{CC0,HP1} to the multi-layered case
and the case with embedded obstacles.
Further, it was recently proved in \cite{EH15,HSV16} that a penetrable, convex polyhedron or polygon obstacle
can be uniquely determined by the far-field pattern over all observation directions incited by a single
incident plane wave. The arguments used in \cite{EH15,HSV16} rely essentially on the expansion of solutions
to the Helmholtz equation. 
Furthermore, it was proved in \cite{NUW11} that a penetrable obstacle with a $C^2$-smooth boundary
in a two-dimensional domain can be uniquely reconstructed from acoustic measurements made on the boundary
of the domain. The method used in \cite{NUW11} uses complex geometrical optics solutions to the Helmholtz
equation with polynomial-type phase functions. The result in \cite{NUW11} was extended to the three-dimensional
case in \cite{Y10}, to the case with Lipschitz continuous interfaces in \cite{SY12} and to the isotropic
Maxwell system in \cite{KS14a}. These results were further generalized to the elastic wave case with the
far-field measurements in \cite{KS14}, based on considering complex geometrical optics solutions to the
Lame system with linear or logarithmic phase functions and using $L^p$ estimates of the gradients of
the solutions to the Lame systems with discontinuous Lame coefficients, and to the anisotropic Maxwell
system in \cite{KLS15} by constructing oscillating-decaying-type solutions to the anisotropic Maxwell system.

In the case when there are embedded obstacles in the penetrable obstacle or in an inhomogeneous medium, 
that is, $D_b\not=\emptyset$, it was proved in \cite{LZ} that the penetrable obstacle $D$ and the embedded 
obstacle $D_b$ can be simultaneously determined from knowledge of the acoustic far-field pattern for incident 
plane waves under the condition that $n$ is a known constant in $D$. 
By employing the technique proposed in \cite{HP0} the uniqueness result was established in \cite{LZY} for
determining the penetrable obstacle $D$ and the embedded obstacle $D_b$ simultaneously from knowledge of 
the electric far-field pattern for incident plane waves provided $n$ is a known complex constant with
positive imaginary part in $D$. 
In \cite{EH11}, Elschner and Hu considered the inverse transmission scattering problem by a two-dimensional, 
impenetrable obstacle surrounded by an unknown piecewise homogeneous medium and proved that the far-field 
patterns for all incident and observation directions at a fixed frequency uniquely determine the unknown 
surrounding medium as well as the impenetrable obstacle. Their method is based on constructing the Green 
function to a two-dimensional elliptic equation with piecewise constant leading coefficients associated
with the direct scattering problem and studying the singularity of the Green function when
the point source position approaches the interfaces and the impenetrable obstacle.
In \cite{LZZ15}, the uniqueness result was proved in determining the scattering support of a complex 
scatterer, possibly consisting of an inhomogeneous medium and impenetrable obstacles, by the acoustic 
far-field measurements. The technique used in \cite{LZZ15} is based essentially on Isakov's idea 
in conjunction with the integral equation method and the singular point source with second-order 
singularity. However, it is difficult to extend the technique of \cite{LZZ15} 
to the case of Maxwell¡¯s equations.

It should be pointed out that all the above uniqueness results were obtained under the assumption
that the transmission coefficient $\la\not=1$ or $\la_H\not=1$ for the isotropic case
or the matrix characterizing the anisotropic medium is different from the identity matrix $I$.
In this paper, we propose a new technique to establish uniqueness results for our inverse scattering 
problem, that is, uniqueness results in determining the penetrable obstacle $D$ 
(or the support $D$ of the inhomogeneous medium in the case $\la=1$ or $\la_H=1$) from knowledge of 
the acoustic far-field measurements or the electric far-field measurements at a fixed frequency, 
disregarding its contents $n$ and $D_b$.
Our method is based on constructing a well-posed interior transmission problem
in a {\em small} domain inside $D$ associated with the Helmholtz or Maxwell equations.
Here, a key role is played by the smallness of the domain which ensures, for the case
$\la=1$ or $\la_H=1$, that the lowest transmission eigenvalue is large so that
a given wave number $k$ is not an eigenvalue of the constructed interior transmission problem.
This is different from the method used in \cite{CC0,HP1}, where the interior transmission problem 
considered is defined in the {\em whole penetrable obstacle} $D$ and may have interior transmission 
eigenvalues, so the case $\la=1$ or $\la_H=1$ is excluded.
Another ingredient in our proofs is a priori estimates of solutions to the transmission scattering problems
with data in $L^p$ ($1<p<2$) which will be established in this paper by using the integral equation method.
These a priori estimates are also expected to be useful on their own right.
Our method works for the cases either $\la=1$ and $\la_H=1$ or $\la\not=1$ and $\la_H\not=1$ and 
can deal with the acoustic and electromagnetic cases in a unified way.
Moreover, our method can also deal with the case with unbounded interfaces, as seen in \cite{LZ16}.
It should be remarked that reconstruction algorithms, based on the factorization method \cite{KG08},
have been developed in \cite{QYZ17,YZZ} to reconstruct the penetrable obstacle numerically, 
disregarding its contents.

It is well known that the existence and distribution of the eigenvalues of interior transmission
problems play an important role in the linear sampling method \cite{CC1} and
the factorization method \cite{KG08}.
Thus, the existence and computation of the eigenvalues of interior transmission problems have
been extensively studied recently (see, e.g. \cite{CC1,CGH,CH,CMS,SJ0} and the references there).
In particular, it was proved in \cite{CGH} that the lowest transmission eigenvalue trends to
infinity as the radius of the domain in which the interior transmission problem is defined
trends to zero. Thus, for a given wave number $k$ the domain can be taken to be small enough
so that $k$ is not an eigenvalue of the interior transmission problem.
Our method is motivated by this observation.

The remaining part of the paper is organized as follows. In Sections \ref{sec2} and \ref{sec3}
we consider the inverse acoustic and electromagnetic scattering problems by penetrable obstacles,
respectively. We also utilize the integral equation method to establish
a priori estimates of solutions to the acoustic and electromagnetic transmission problems
with data in $L^p$ ($1<p<2$), which are used in our uniqueness proofs of the
inverse scattering problems. It is expected that these a priori estimates are also useful
in other applications.

\section{The inverse acoustic scattering problem} \label{sec2}
\setcounter{equation}{0}

In this section we introduce the new technique to prove the unique determination of the inhomogeneous
penetrable obstacle $D$ (or the support $D$ of the inhomogeneous medium in the case $\la=1$) from the
far-field pattern $u^\infty(\wi{x};d)$ for all $\wi{x},d\in S^2$, disregarding its contents $n$ and $D_b$.
Our method is based on constructing a well-posed interior transmission problem in a {\em small} domain
associated with the Helmholtz or modified Helmholtz equation. Here, a key role is played by the smallness
of the domain which ensures that the given wave number $k$ is not a transmission eigenvalue of
the constructed interior transmission problem for the case $\la=1$.
It should be noted that all the previous methods do not work for the case $\la=1$.
For the case $\la\not=1$ which has been considered previously,
our method gives a simplified proof. Furthermore, our method also works for the electromagnetic
case, as shown in the next section, and for the case of unbounded interfaces (see \cite{LZ16}).

\subsection{Interior transmission problems}

Let $\Om\subset\R^3$ be a simply connected and bounded domain with $\pa\Om\in C^2$.
In the case $\la\neq1$ we consider the following modified interior transmission problem (MITP):
\be\label{2.8}
\tr U-U=\rho_1 &&\text{in}\;\;\Om, \\ \label{2.9}
\tr V-V=\rho_2 &&\text{in}\;\;\Om, \\ \label{2.10}
U-V=f_1,\;\la\frac{\pa U}{\pa\nu}-\frac{\pa V}{\pa\nu}=f_2 && \text{on}\;\;\pa\Om,
\en
where $\rho_0,\rho_1\in L^2(\Om)$, $f_1\in H^{{1}/{2}}(\pa\Om)$ and $f_2\in H^{-{1}/{2}}(\pa\Om)$.
This problem has been studied in \cite{CC1}, and the following result
was obtained (see \cite[Theorem 6.7]{CC1}).

\begin{lemma}\label{le2.2}\rm{{(\cite[Theorem 6.7]{CC1})}}
If $\la\not=1$ then the problem (MITP) has a unique solution $(U,V)\in H^1(\Om)\times H^1(\Om)$
such that
\ben
\|U\|_{H^1(\Om)}+\|V\|_{H^1(\Om)}\le C\big(\|\rho_1\|_{L^2(\Om)}+\|\rho_2\|_{L^2(\Om)}
+\|f_1\|_{H^{{1}/{2}}(\pa\Om)}+\|f_2\|_{H^{-{1}/{2}}(\pa\Om)}\big).
\enn
\end{lemma}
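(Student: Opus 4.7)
The plan is to reduce the coupled transmission system (MITP) to two decoupled scalar boundary-value problems for the coercive operator $\Delta - I$, exploiting the hypothesis $\la\ne 1$ via an elementary algebraic device.

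Introduce the auxiliary unknowns $W := U - V$ and $Z := \la U - V$. Straightforward linear combinations of the interior equations (\ref{2.8})--(\ref{2.9}) give
\begin{equation*}
\Delta W - W = \rho_1 - \rho_2 \quad \text{and} \quad \Delta Z - Z = \la\rho_1 - \rho_2 \quad \text{in } \Om,
\end{equation*}
while the transmission conditions (\ref{2.10}) translate into the Dirichlet condition $W = f_1$ on $\pa\Om$ and the Neumann condition $\pa Z/\pa\nu = f_2$ on $\pa\Om$. Thus $W$ solves a Dirichlet problem and $Z$ solves a Neumann problem, both for the operator $\Delta - I$, and the two problems are completely decoupled from one another.

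Each of these scalar problems is standard and well-posed in $H^1(\Om)$. After a trace-theoretic lifting of $f_1$, the Dirichlet problem for $W$ is handled by Lax--Milgram on $H^1_0(\Om)$ applied to the sesquilinear form $\int_\Om(\nabla w\cdot\ov{\nabla\phi} + w\ov{\phi})\,dx$, which is just the $H^1$ inner product and hence coercive. The Neumann problem for $Z$ is handled by Lax--Milgram on the full space $H^1(\Om)$ with the same form, where the datum $f_2 \in H^{-1/2}(\pa\Om)$ enters through the boundary duality pairing on the right-hand side. Both yield unique $H^1$ solutions with the expected continuity estimates in terms of $\|\rho_1\|_{L^2}$, $\|\rho_2\|_{L^2}$, $\|f_1\|_{H^{1/2}(\pa\Om)}$ and $\|f_2\|_{H^{-1/2}(\pa\Om)}$.

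Finally, since $\la\ne 1$ the linear change of variables $(U,V)\mapsto(W,Z)$ is invertible, and one recovers
\begin{equation*}
U = \frac{Z - W}{\la - 1}, \qquad V = \frac{Z - \la W}{\la - 1}.
\end{equation*}
These lie in $H^1(\Om)$ and, after multiplying the estimates for $W$ and $Z$ by $|\la-1|^{-1}$, satisfy the asserted bound; uniqueness follows in the same way, since vanishing data force $W=Z=0$ and therefore $U=V=0$. There is essentially no analytic obstacle here: the role of the hypothesis $\la\ne 1$ is precisely to render this algebraic inversion nonsingular, which also clarifies why the case $\la=1$ considered earlier in this section requires the entirely different small-domain argument of Lemma~\ref{le2.1} and Corollary~\ref{co2.1}.
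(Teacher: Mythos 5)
Your argument is correct, and it is genuinely different from what the paper does: the paper gives no proof at all for this lemma, quoting it from \cite[Theorem 6.7]{CC1}, where the well-posedness of the modified interior transmission problem is obtained by a variational formulation of the \emph{coupled} pair $(U,V)$ and a coercivity estimate for the associated sesquilinear form. Your observation that the substitution $W=U-V$, $Z=\la U-V$ completely decouples (MITP) into a Dirichlet problem for $W$ and a Neumann problem for $Z$, both for the coercive operator $\tr-I$, is valid precisely because the two interior equations (\ref{2.8})--(\ref{2.9}) involve the \emph{same} operator — which is the whole point of passing to the ``modified'' problem, and which is exactly what fails for the genuine ITP (\ref{2.1})--(\ref{2.3}), where the operators $\tr+k^2n$ and $\tr+k^2$ differ. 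The algebra checks out: $\tr U-U=\rho_1$ and $\tr V-V=\rho_2$ are recovered from the equations for $W$ and $Z$ because $(U,V)\mapsto(W,Z)$ has determinant $\la-1\neq0$, and the two transmission conditions in (\ref{2.10}) are exactly $W|_{\pa\Om}=f_1$ and $\pa Z/\pa\nu|_{\pa\Om}=f_2$; since $\tr W,\tr Z\in L^2(\Om)$, the normal trace of $Z$ in $H^{-1/2}(\pa\Om)$ is well defined, so the Neumann condition makes sense. What your route buys is a shorter, self-contained proof that works uniformly for every $\la\neq1$ (coercivity arguments for the coupled system typically have to treat $\la>1$ and $\la<1$ separately, or impose a sign condition), and it makes transparent that $\la\neq1$ enters only through the invertibility of a $2\times2$ matrix. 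Its limitation, worth keeping in mind, is that it does not extend to the anisotropic setting of \cite[Theorem 6.7]{CC1} (matrix coefficient $A$ in the principal part), where the two equations no longer share an operator and the variational approach of the reference is needed.
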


In the case $\la=1$ we consider the following interior transmission problem (ITP):
\be\label{2.1}
\tr U+k^2n(x)U=0 && \text{in}\;\;\Om, \\ \label{2.2}
\tr V+k^2V=0 && \text{in}\;\;\Om, \\ \label{2.3}
    U-V=f_1,\;\;\frac{\pa U}{\pa\nu}-\frac{\pa V}{\pa\nu}=f_2 && \text{on}\;\;\pa\Om,
\en
where $f_1\in H^{{1}/{2}}(\pa\Om)$ and $f_2\in H^{-{1}/{2}}(\pa\Om)$.
This problem has been studied in \cite{CGH}.

Let $w:=U-V$. Then it is easy to see that $w$ satisfies the fourth-order equation
\be\label{2.4}
(\tr+k^2n)\frac{1}{n-1}(\tr+k^2)w=0
\en
with the boundary conditions $\g_0w=f_1$ and $\g_1w=f_2$. Here, $\g_j$ $(j=0,1)$ denotes
the $j$th-order trace operator.

Define the Hilbert space
\ben
H^1_\tr(\Om)=\{w\in H^1(\Om):\;\Delta w\in L^2(\Om)\}
\enn
with the norm $\|w\|^2_{H^1_\Delta(\Om)}=\|w\|^2_{H^1(\Om)}+\|\Delta w\|^2_{L^2(\Om)}$.
Using the Green's theorem, we easily prove that $\g_0w\in H^{{1}/{2}}(\pa\Om)$,
$\g_1w\in H^{-{1}/{2}}(\pa\Om)$. In particular, if $\g_0w=\g_1w=0$ for
all $w\in H^1_\Delta(\Om)$, then $H^1_\Delta(\Om)=H^2_0(\Om)$.

We assume that the data $f_1\in H^{{1}/{2}}(\pa\Om)$ and $f_2\in H^{-{1}/{2}}(\pa\Om)$
satisfy the condition (\textbf{C}) with some $w_0\in H^1_\Delta(\Om),$ that is,
there exists a function $w_0\in H^1_\Delta(\Om)$ such that $\g_0w_0=f_1$, $\g_1w_0=f_2.$
Then the interior transmission problem (ITP) is equivalent to the variational problem:
Find $w\in H^1_\Delta(\Om)$ with $\g_0w=f_1$ and $\g_1w=f_2$ such that
\be\label{2.5}
a(w,h):=\int_{\Om}\frac{1}{n-1}(\tr+k^2)w(\Delta+k^2n)\ov{h}dx=0\quad
\text{for\;all\;}\;h\in H^2_0(\Om).
\en
Let $\wid{w}:=w-w_0\in H^2_0(\Om)$. Then the variational problem (\ref{2.5}) is equivalent to
the problem: Find $\wid{w}\in H^2_0(\Om)$ such that
\be\label{2.6}
a(\wid{w},h)=-a(w_0,h)\quad\text{for\;all\;}\;h\in H^2_0(\Om).
\en
Based on (\ref{2.6}), the following result can be established (see \cite{CGH} for a proof).

\begin{lemma}\label{le2.1}\rm{(\cite[Lemma 2.4]{CGH})}
If $n(x)>1+r_0$ or $0<n(x)<1-r_1$ with some constants $r_0,r_1>0$, then
\ben
a(\wid{w},\wid{w})\geq C\|\wid{w}\|^2_{H^2_0(\Om)},\quad\forall\wid{w}\in H^2_0(\Om)
\enn
for $0<k^2<\min\{\la_1(\Om),\la_1(\Om)/\sup(n)\},$ where $\la_1(\Om)$ is the first Dirichlet
eigenvalue of the operator $-\tr$ in $\Om$.
\end{lemma}

By Lemma \ref{le2.1} the following result can be easily obtained.

\begin{corollary}\label{co2.1}
Assume that $f_1,f_2$ satisfy the condition (\textbf{C}) with $w_0\in H^1_\Delta(\Om).$
For any fixed $k>0$, if the diameter of $\Om$ is small enough (so $\la_1(\Om)$ is
large enough) so that $k^2<\min\{\la_1(\Om),\la_1(\Om)/\sup(n)\}$,
then the interior transmission problem (ITP)
has a unique solution $(U,V)\in L^2(\Om)\times L^2(\Om)$ with
\be\label{2.7}
\|U\|_{L^2(\Om)}+\|V\|_{L^2(\Om)}\leq C\|w_0\|_{H^1_\Delta(\Om)}.
\en
\end{corollary}

\begin{proof}
For any fixed $k>0$, if the diameter of $\Om$ is small enough
so that $k^2<\min\{\la_1(\Om),\la_1(\Om)/\sup(n)\}$, then, by Lemma \ref{le2.1} it follows that
\ben
a(\wid{w},\wid{w})\geq C\|\wid{w}\|^2_{H^2_0(\Om)}\quad\mbox{for all}\;\;\wid{w}\in H^2_0(\Om).
\enn
This, together with the Lax-Milgram theorem, implies that the variational problem (\ref{2.6})
has a unique solution $\wid{w}\in H^2_0(\Om)$ satisfying the estimate
\be\label{2.7a}
\|\wid{w}\|_{H^2_0(\Om)}\le C\|w_0\|_{H^1_\Delta(\Om)}.
\en
Define $U:=[{1}/({n-1})](\Delta+k^2)w$, $V:=U-w$. Then it is easy to see that
$(U,V)\in L^2(\Om)\times L^2(\Om)$, with $U-V\in H^2_0(\Om)$, is the unique solution to the
interior transmission problem (ITP). The estimate (\ref{2.7}) follows easily from (\ref{2.7a})
and the fact that $w=\wid{w}+w_0$.
\end{proof}

\begin{remark}\label{rm2.1}{\rm
The uniqueness result for the case $\la\not=1$ corresponds to the well-posed problem (MITP),
whilst that for the case $\la=1$ corresponds to the much harder problem (ITP) which is not necessarily
well-posed for all wavenumbers $k$ if $\Om$ is not small, as shown in Lemma \ref{le2.2}
and Corollary \ref{co2.1}. This explains clearly why the transmission coefficient $\la$ is
assumed not to be $1$ (i.e., $\la\not=1$) in all the previous methods of the uniqueness proofs
of the inverse problems.
}
\end{remark}

\subsection{A priori estimates for the transmission problems with $L^p$ boundary data}

In this subsection, we establish a priori estimates of solutions to the acoustic transmission
problem with boundary data in $L^p$ ($1<p\le 2$), employing the integral equation method.
These a priori estimates are needed later in the uniqueness proof of the inverse problem
and are also interesting on their own right.

Consider the general acoustic transmission problem
\be\label{2.11}
\tr w_1+k^2w_1=0 && \text{in}\;\;\R^3\se\ov{D},\\ \label{2.12}
\tr w_2+k^2n(x)w_2=0 &&\text{in}\;\;D\se\ov{D_b},\\ \label{2.13}
w_1-\g w_2=f_1,\;\;
\frac{\pa w_1}{\pa\nu}-\frac{\pa w_2}{\pa\nu}=f_2 &&\text{on}\;\;\pa D,\\ \label{2.14a}
\mathscr{B}(w_2)=0 && \text{on}\;\;\pa D_b, \\ \label{2.14}
\frac{\pa w_1}{\pa r}-ikw_1=o\Big(\frac{1}{r}\Big) && r=|x|\rightarrow\ify,
\en
where $f_1,f_2\in L^p(\pa D)$ with $1<p<2$ and $\g=1/\la$.

We introduce the single- and double-layer boundary operators
\ben
(S_{\rm{e}\rm{e}}\phi)(x)&:=&\int_{\pa D}\Phi(x,y)\phi(y)ds(y),\quad x\in\pa D,\\
(K_{\rm{e}\rm{e}}\phi)(x)&:=&\int_{\pa D}\frac{\pa\Phi(x,y)}{\pa\nu(y)}\phi(y)ds(y),\quad x\in\pa D
\enn
and the their normal derivative operators
\ben
(K'_{\rm{e}\rm{e}}\phi)(x)&:=&\int_{\pa D}\frac{\pa\Phi(x,y)}{\pa\nu(x)}\phi(y)ds(y),\quad x\in\pa D,\\
(T_{\rm{e}\rm{e}}\phi)(x)&:=&\frac{\pa}{\pa\nu(x)}\int_{\pa D}\frac{\pa\Phi(x,y)}{\pa\nu(y)}\phi(y)ds(y),
  \quad x\in\pa D.
\enn
Similarly, we also introduce the boundary operators $S_{\rm{i}\rm{i}}$, $K_{\rm{i}\rm{i}}$,
$K'_{\rm{i}\rm{i}}$ and $T_{\rm{i}\rm{i}}$ defined on $\pa D_b$ as well as
$S_{\rm{t}\rm{h}}$, $K_{\rm{t}\rm{h}}$, $K'_{\rm{t}\rm{h}}$ and $T_{\rm{t}\rm{h}}$ with
$\rm{t,h}=\rm{e,i}$, respectively, where, for example, $S_{\rm{e}\rm{i}}$ is defined similarly
as $S_{\rm{e}\rm{e}}$ but with $x\in \pa D_b$.
It follows from \cite[Lemma 9]{P0} and \cite[Lemma 1]{P1} that the operators
$S_{\rm{p}\rm{p}},K_{\rm{p}\rm{p}}$ and $K'_{\rm{p}\rm{p}}$ with $\rm{p}=\rm{e,i}$ are both
bounded and compact in $L^q(\pa D)$ ($1<q<\infty$).

\begin{theorem}\label{thm2.3}
For $f_1,f_2\in L^p(\pa D)$ with $4/3\le p\le2$ the transmission problem $(\ref{2.11})-(\ref{2.14})$
has a unique solution $(w_1,w_2)\in L^2_{\rm{loc}}(\R^3\se\ov{D})\times L^2(D_b)$ satisfying that
\be\label{2.15}
\|w_1\|_{L^2_{\rm{loc}}(\R^3\se\ov{D})}+\|w_2\|_{L^2(D\se\ov{D_b})}
\leq C(\|f_1\|_{L^p(\pa D)}+\|f_2\|_{L^p(\pa D)}).
\en
\end{theorem}

\begin{proof}
We only consider the case with an impedance condition on $\pa D_b$, that is,
$\mathscr{B}(w_2)=\pa w_2/\pa\nu+i\rho w_2=0$. The other case can be treated similarly.

Step 1. Assume that $k^2n(x)\equiv k_1^2>0$ is a constant.
We seek a solution of the problem (\ref{2.11})-(\ref{2.14}) in the form
\be\label{2.16}
w_1(x)&=&\int_{\pa D}\Phi(x,y)\phi(y)ds(y)
    +\int_{\pa D}\frac{\pa\Phi(x,y)}{\pa\nu(y)}\psi(y)ds(y),\quad x\in\R^3\se\ov{D}\\ \no
w_2(x)&=&\int_{\pa D}\Phi_1(x,y)\phi(y)ds(y)
                   +\int_{\pa D}\frac{\pa\Phi_1(x,y)}{\pa\nu(y)}\psi(y)ds(y)\\ \label{2.17}
       &&+\int_{\pa D_b}\Phi_1(x,y)\eta(y)ds(y), \qquad x\in D\se\ov{D_b},
\en
where $\Phi(x,y)=\exp({ik|x-y|})/({4\pi|x-y|})$ and $\Phi_1(x,y)=\exp({ik_1|x-y|})/({4\pi|x-y|})$.

Then, by the jump relations of the layer potentials (see \cite{P1} for the case in $L^p$
and \cite{CK,CK1} for the case in spaces of continuous functions),
the transmission problem (\ref{2.11})-(\ref{2.14}) can be reduced to the system of integral equations
\be\label{2.18}
\left(\begin{array}{c}
    \psi \\
    \phi \\
    \eta\\
  \end{array}\right)
+L\left(\begin{array}{c}
    \psi \\
    \phi \\
    \eta\\
  \end{array}\right)
=\left(\begin{array}{c}
    hf_1 \\
    -f_2 \\
    0\\
  \end{array}\right)
\qquad \text{in}\;\;L^p(\pa D)\times L^p(\pa D)\times C(\pa D_b),
\en
where $h:=2/(1+\g)$ and the operator $L$ is given by
\ben
L:=\left(\begin{array}{ccc}
    h(K_{\rm{e}\rm{e}}-\g K^{(1)}_{\rm{e}\rm{e}}) & h(S_{\rm{e}\rm{e}}-\g S^{(1)}_{\rm{e}\rm{e}})
    & -h\g S^{(1)}_{\rm{i}\rm{e}}\\
    T^{(1)}_{\rm{e}\rm{e}}-T_{\rm{e}\rm{e}} & K'^{(1)}_{\rm{e}\rm{e}}-K'_{\rm{e}\rm{e}}
    & K'^{(1)}_{\rm{i}\rm{e}} \\
    -2(T^{(1)}_{\rm{e}\rm{i}}+i\rho K^{(1)}_{\rm{e}\rm{i}})
    &-2 (K'^{(1)}_{\rm{e}\rm{i}}+i\rho S^{(1)}_{\rm{e}\rm{i}})
     & -2(K'^{(1)}_{\rm{i}\rm{i}}+i\rho S^{(1)}_{\rm{i}\rm{i}})
  \end{array}\right).
\enn
Here, the operators $S^{(1)}_{\rm{t}\rm{h}},K^{(1)}_{\rm{t}\rm{h}},K'^{(1)}_{\rm{t}\rm{h}}$
and $T^{(1)}_{\rm{t}\rm{h}}$ with $\rm{t,h}=\rm{e,i}$, respectively, are defined similarly as
$S_{\rm{t}\rm{h}},K_{\rm{t}\rm{h}},K'_{\rm{t}\rm{h}}$ and $T_{\rm{t}\rm{h}}$ with the kernel
$\Phi(x,y)$ replaced by $\Phi_1(x,y)$.
It is easy to see that (\ref{2.18}) is of Fredholm type since
the elements of $L$ are all compact operators in the corresponding Banach spaces.
This, together with the uniqueness of the scattering problem (\ref{1.1})-(\ref{1.4}), implies that
(\ref{2.18}) has a unique solution $(\psi,\phi,\eta)^T\in L^p(\pa D)\times L^p(\pa D)\times C(\pa D_b)$
satisfying the estimate
\be\label{2.19}
\|\psi\|_{L^p(\pa D)}+\|\phi\|_{L^p(\pa D)} +\|\eta\|_{L^\ify(\pa D_b)}
\leq C(\|f_1\|_{L^p(\pa D)}+\|f_2\|_{L^p(\pa D)}).
\en
Therefore, we obtain that
\be\no
&&\left\|\int_{\pa D}\Phi_1(\cdot,y)\phi(y)ds(y)\right\|_{L^2(D)}
=\sup_{g\in L^2,\|g\|_{L^2(D)}=1}
\left|\int_{D}\left\{\int_{\pa D}\Phi_1(x,y)\phi(y)ds(y)\right\}g(x)dx\right|\\ \no
&&\qquad\qquad=\sup_{g\in L^2,\|g\|_{L^2(D)}=1}
\left|\int_{\pa D}\left\{\int_{D}\Phi_1(x,y)g(x)dx\right\}\phi(y)ds(y)\right|\\ \no
&& \qquad\qquad\leq |\pa D|^{{1}/{q}}\sup_{g\in L^2,\|g\|_{L^2(D)}=1}
\sup_{y\in\pa D}\|\Phi_1(\cdot,y)\|_{L^2(D)}\|g\|_{L^2(D)}\|\phi\|_{L^p(\pa D)}\\ \label{2.20}
&&\qquad\qquad=|\pa D|^{{1}/{q}}\sup_{y\in\pa D}\|\Phi_1(\cdot,y)\|_{L^2(D)}\|\phi\|_{L^p(\pa D)}
\en
and
\be\no
&&\left\|\int_{\pa D}\frac{\pa \Phi_1(\cdot,y)}{\pa\nu(y)}\psi(y)ds(y)\right\|_{L^2(D)}
=\sup_{g\in L^2,\|g\|_{L^2(D)}=1}
\left|\int_{D}\left\{\int_{\pa D}\frac{\pa\Phi_1(x,y)}{\pa\nu(y)}\psi(y)ds(y)\right\}g(x)dx\right|\\ \no
&&\qquad\qquad=\sup_{g\in L^2,\|g\|_{L^2(D)}=1}
\left|\int_{\pa D}\left\{\frac{\pa}{\pa\nu(y)}\int_{D}\Phi_1(x,y)g(x)dx\right\}\psi(y)ds(y)\right|\\ \no
&&\qquad\qquad \leq \sup_{g\in L^2,\|g\|_{L^2(D)}=1}
\left\|\frac{\pa}{\pa\nu(y)}\int_{D}\Phi_1(x,\cdot)g(x)dx\right\|_{L^q(\pa D)}
\cdot\left\|\psi\right\|_{L^p(\pa D)}\\ \label{2.21}
&&\qquad\qquad \leq \sup_{g\in L^2,\|g\|_{L^2(D)}=1}
C\|g\|_{L^2(D)}\cdot\left\|\psi\right\|_{L^p(\pa D)}
=C\left\|\psi\right\|_{L^p(\pa D)}
\en
with $1/p+1/q=1$. Here, we have used the fact that the volume potential operator is
bounded from $L^2(D)$ into $W^{2,2}(D)$ (see \cite[Theorem 9.9]{DT}),
and the boundary trace operator is bounded from $W^{1,2}(D)$ into $L^q(\pa D)$ for $2\le q\le 4$
(see \cite[Theorem 5.36]{AF}).
Further, we derive from \cite{CK} that
\be\label{AKa}
\left\|\int_{\pa D_b}\Phi_1(\cdot,y)\eta(y)ds \right\|_{L^\ify(\R^3)} & \leq & C\|\eta\|_{L^\ify(\pa D_b)}.
\en
Then the desired estimate (\ref{2.15}) follows from (\ref{2.16})-(\ref{2.17}) and
(\ref{2.19})-(\ref{AKa}) in the case when $k^2n(x)\equiv k^2_1$.

Step 2. For the general case $n\in L^\infty(D\se\ov{D}_b)$, we consider the following problem
\be\label{2.22}
\tr W_1+k^2W_1=0&& \text{in}\;\;\R^3\se\ov{D},\\ \label{2.23}
\tr W_2+k^2n(x)W_2=g&& \text{in}\;\;D\se\ov{D_b},\\ \label{2.24}
W_1-\g W_2=0,\;\;\frac{\pa W_1}{\pa\nu}-\frac{\pa W_2}{\pa\nu}=0&& \text{on}\;\;\pa D,\\ \label{2.25a}
\mathscr{B}(W_2)=0 && \text{on}\;\;\pa D_b,\\ \label{2.25}
\frac{\pa W_1}{\pa r}-ikW_1=o\Big(\frac{1}{r}\Big)&& r=|x|\rightarrow\infty,
\en
where $g:=(k_1^2-k^2n(x))\wid{w}_2\in L^2(D\se\ov{D}_b)$ and $(\wid{w}_1,\wid{w}_2)$ denotes the solution
of the problem (\ref{2.11})-(\ref{2.14}) with $k^2n(x)\equiv k^2_1$.
By Step 1, we have
\be\label{2.15+}
\|\wid{w}_1\|_{L^2_{\rm{loc}}(\R^3\se\ov{D})}+\|\wid{w}_2\|_{L^2(D\se\ov{D_b})}
\le C(\|f_1\|_{L^p(\pa D)}+\|f_2\|_{L^p(\pa D)}).
\en
By using the variational method, it can be easily proved that for every $g\in L^2(D\se\ov{D_b})$
the problem (\ref{2.22})-(\ref{2.25}) has a unique solution
$(W_1,W_2)\in H^1_{\rm{loc}}(\R^3\se\ov{D})\times H^1(D\se\ov{D_b})$ satisfying the estimate
\be\label{2.26}
\|W_1\|_{H^1_{\rm{loc}}(\R^3\se\ov{D})}+\|W_2\|_{H^1(D\se\ov{D_b})}\leq C\|g\|_{L^2(D\se\ov{D_b})}.
\en
Define $w_1:=W_1+\wid{w}_1$ and $w_2:=W_2+\wid{w}_2$.
Then from (\ref{2.15+}) and (\ref{2.26}) it follows that
$(w_1,w_2)\in L^2_{\rm{loc}}(\R^3\se\ov{D})\times L^2(D\se\ov{D_b})$ is the unique solution of
the problem (\ref{2.11})-(\ref{2.14}) satisfying the estimate (\ref{2.15}).
The proof is thus complete.
\end{proof}

\begin{corollary}\label{co2.4}
For $z^*\in\pa D$ and for a sufficiently small $\delta>0$ define
$z_j:=z^*+({\delta}/{j})\nu(z^*)\in\R^3\se\ov{D}$, $j\in\N$.
Let $(u_j,v_j)$ be the solution of the transmission problem $(\ref{1.1})-(\ref{1.4})$ corresponding to
the incident point source $u^i_j=\Phi(x,z_j)$, $j\in\N$. Then
\be\label{2.27}
\|v_j\|_{L^2(D\se\ov{D_b})}\le C
\en
uniformly for $j\in\N$.
\end{corollary}

\begin{proof}
Let $w_{1j}:=u_j^s-\Phi(x,y_j)$ and $w_{2j}:=\la v_j$ with $y_j:=z^*-({\delta}/{j})\nu(z^*)\in D\se\ov{D_b}$.
Then $(w_1,w_2)$ satisfies the problem (\ref{2.11})-(\ref{2.14}) with
\ben
f_1=f_{1j}&:=&-\Phi(z,z_j)-\Phi(x,y_j),\\
f_2=f_{2j}&:=&-\frac{\pa\Phi(z,z_j)}{\pa\nu(z)}-\frac{\pa\Phi(z,y_j)}{\pa\nu(z)}.
\enn
Obviously, $f_{1j}\in L^p(\pa D)$ is uniformly bounded for $j\in\N$, where $1<p<2$.
Further, from \cite[Lemma 4.2]{CKM} it is seen that $f_{2j}\in C(\pa D)$  is uniformly bounded
for $j\in\N$, so $f_{2j}\in L^p(\pa D)$ is uniformly bounded for $j\in\N$, where $1<p<2$.
The estimate (\ref{2.27}) then follows from Theorem \ref{thm2.3}.
\end{proof}

\begin{theorem}\label{thm2.5}
Let $(u,v)$ be the solution of the scattering problem $(\ref{1.1+})-(\ref{1.4+})$
corresponding to the supper singular point source $u^i=\nabla_x\Phi(x,z)\cdot\vec{a}$,
where $z\in\R^3\se\ov{D}$ and $\vec{a}\in \R^3$ is a fixed vector.
Then $v\in L^p(D\se\ov{D_b})$ and $v-u^i\in H^1(D\se\ov{D_b})$ such that
\be\label{2.28}
\|v\|_{L^p(D\se\ov{D_b})}+\|v-u^i\|_{H^1(D\se\ov{D_b})}
\leq C(\|u^i\|_{L^p(D\se\ov{D_b})}+\|u^i\|_{L^\ify(\pa D_b)})
\en
for every $p$ with $6/5\le p<2$.
\end{theorem}

To prove the estimate (\ref{2.28}), we reformulate the scattering problem (\ref{1.1+})-(\ref{1.4+})
as an equivalent Lippmann-Schwinger-type equation. To this end, we introduce the exterior boundary
value problem associated with $D_b$ and the boundary condition $\mathscr{B}$:
\be\label{2.28a}
\left\{
 \begin{array}{ll}
 \tr w+k^2w =0 &\text{in}\;\;\R^3\se\ov{D_b}, \\
 \mathscr{B}(w)=f &\text{on}\;\; \pa D_b,\\
 \ds\frac{\pa w}{\pa |x|}-ik w =o\left(\frac{1}{|x|}\right) &  \text{for}\;\;|x|\to\ify.
  \end{array}
\right.
\en
It is well known that the problem (\ref{2.28a}) is well-posed for every $f$ belonging to a suitable
Holder or Sobolev space.

Let $G^s(\cdot,y)$ be the solution to the problem (\ref{2.28a}) with $f=-\mathscr{B}(\Phi(\cdot,y))$
and let $U(\cdot;y)$ be the solution to the problem (\ref{2.28a}) with
$f=-\mathscr{B}(\nabla\Phi(\cdot,y)\cdot\vec{a})$ with $y\in\R^3\se\ov{D_b}$.
Define
\ben
&&G(x,y):=\Phi(x,y)+G^s(x,y) \qquad\qquad\text{in}\;\;\R^3\se\ov{D_b}, \\
&&U^i(x;y):=\nabla_x\Phi(x,y)\cdot\vec{a}+U(x;y)\qquad\text{in}\;\;\R^3\se\ov{D_b}.
\enn
Then $G(x,y)$ satisfies the problem (\ref{2.28a}) with $f=0$, that is, $G(x,y)$ is the Green function of
the problem (\ref{2.28a}) with $f=0$. By the representation theorem for $v(\cdot;z)$, we have
\be\no
v(x;z)&=&\left(\int_{\pa D}-\int_{\pa D_b}\right)
\left\{G(x,y)\frac{\pa v(y;z)}{\pa\nu(y)}-\frac{\pa G(x,y)}{\pa\nu(y)}v(y;z)\right\}ds(y)\\ \label{2.28b}
&&-\int_{D\se\ov{D_b}}k^2[1-n(y)]G(x,y)v(y;z)dy,\qquad\qquad \text{for}\;\;x\in D\se\ov{D_b}.
\en
The integral on $\pa D_b$ obviously equals to zero due to the boundary condition.
Further, the radiation condition for $G(\cdot,x)$ and $v^s(\cdot;z)$ yields
\ben
\int_{\pa D}\left\{G(x,y)\frac{\pa [v(y;z)-U^i(y;z)]}{\pa\nu(y)}
                      -\frac{\pa G(x,y)}{\pa\nu(y)}[v(y;z)-U^i(y;z)]\right\}ds(y) =0.
\enn
This, combined with (\ref{2.28b}) and the Green theorem, implies that the solution $v$ of
the scattering problem (\ref{1.1+})-(\ref{1.4+}) with $u^i=\nabla_x\Phi(x,z)\cdot\vec{a}$
satisfies the Lippmann-Schwinger-type equation
\be\label{2.28c}
v(x;z) =U^i(x;z) - k^2\int_{D\se\ov{D_b}}[1-n(y)]G(x,y)v(y;z)dy.
\en

Conversely, it is easy to prove that the solution of (\ref{2.28c}) also satisfies the scattering
problem (\ref{1.1+})-(\ref{1.4+}).

\begin{remark}\label{rm2.5a}{\rm
The equivalence between the scattering problem (\ref{1.1+})-(\ref{1.4+}) and the Lippmann-Schwinger-type
equation (\ref{2.28c}) still holds for a general incident field $u^i$ such as a plane wave
$u^i=e^{ikx\cdot d}$ with $d\in S^2$ and a point source $u^i=\Phi(x,y)$ with $y\in\R^3\se\ov{D}$.
}
\end{remark}

{\bf Proof of Theorem \ref{thm2.5}.} Define the volume operator $T$ in $L^p(D\se\ov{D_b})$ by
\ben
(T\varphi)(x):=k^2\int_{D\se\ov{D_b}}[1-n(y)]G(x,y)\varphi(y)dy\quad \text{in}\;D\se\ov{D_b}.
\enn
Then we have
\be\label{thm2.5a}
(I+T)v(\cdot;z)=U^i(\cdot;z)\quad\text{in}\;\;L^p(D\se\ov{D_b})
\en
since $U^i(\cdot;z)\in L^p(D\se\ov{D_b})$ for $6/5\leq p<2$. It follows from \cite[Theorem 9.9]{DT}
that $T$ is bounded from $L^p(D\se\ov{D_b})$ into $W^{2,p}(D\se\ov{D_b})$
and therefore compact in $L^p(D\se\ov{D_b})$. Thus, and by the uniqueness result for the scattering
problem (\ref{1.1+})-(\ref{1.4+}), the operator $I+T$ is of Fredholm type with index zero.
The Fredholm alternative then implies the existence of a unique solution $v$ in $L^p(D\se\ov{D_b})$
to (\ref{thm2.5a}) with the estimate
\be\label{2.29}
\|v(\cdot;z)\|_{L^p(D\se\ov{D_b})}\leq C\|U^i(\cdot;z)\|_{L^p(D\se\ov{D_b})}
\leq C(\|u^i(\cdot;z)\|_{L^p(D\se\ov{D_b})}+\|u^i(\cdot;z)\|_{L^\infty(\pa D_b)})
\en
for $6/5\leq p<2$. From this, the well-posedness of (\ref{2.28a}) and the embedding result that
$W^{2,p}(D\se\ov{D_b})\hookrightarrow H^1(D\se\ov{D_b})$ for $6/5\le p<2$,
the required estimate (\ref{2.28}) follows. The proof is thus complete.
$\Box$

\subsection{Uniqueness of the inverse problem}

Based on Lemma \ref{le2.2}, Corollary \ref{co2.1} and Theorems \ref{thm2.3} and \ref{thm2.5},
we shall prove the global uniqueness result in determining the inhomogeneous penetrable
obstacle $D$ disregarding its contents if the transmission coefficient $\la\not=1$ or if the refractive
index $n$ has a singularity at the interface $\pa D$ in the case $\la=1$, that is, $n$ satisfies
the following assumption (A).

{\bf Assumption (A)}: there exists an open neighborhood of $\pa D$, $\mathcal{O}(\pa D)\Subset D\se\ov{D_b}$,
and a positive constant $\vep_0>0$ such that $|n(x)-1|\geq\vep_0$ for a.e. $x\in\mathcal{O}(\pa D)$.

\begin{theorem}\label{thm2.4}
Given $k>0$, let $u^\infty(\wi{x};d)$ and $\wid{u}^\infty(\wi{x};d)$ be the far-field patterns of the
scattering solutions to the transmission problem $(\ref{1.1})-(\ref{1.4})$ (or the scattering problem
$(\ref{1.1+})-(\ref{1.4+})$) with respect to the penetrable obstacle $D$ with the refractive index
$n\in L^\infty(D\se\ov{D_b})$ as well as the embedded obstacle $D_b$ and the penetrable obstacle $\wid{D}$
with the refractive index $\wid{n}\in L^\infty(\wid{D}\se\ov{\wid{D}_b})$ as well as the embedded obstacle
$\wid{D}_b$, respectively. Assume that $u^\infty(\wi{x};d)=\wid{u}^\infty(\wi{x};d)$ for all $\wi{x},d\in\Sp^2$.

{\rm (i)} If $\la\not=1$, then $D=\wid{D}$.

{\rm (ii)} If $\la=1$ and $n,\wid{n}$ satisfy Assumption (A), then $D=\wid{D}$.
\end{theorem}

\begin{proof}
Assume that $D\neq\wid{D}$. Without loss of generality, choose $z^*\in\pa D\se\pa\wid{D}$ and define
\ben
z_j:=z^*+({\delta}/{j})\nu(z^*),\;\;j=1,2,\ldots
\enn
with a sufficiently small $\delta>0$ such that $z_j\in B$, where $B$ denotes
a small ball centered at $z^*$ such that $B\cap(\ov{\wid{D}\cup D_b})=\emptyset$.
See Figure \ref{two-obstacles}.

\begin{figure}[!htbp]
  \centering
  \includegraphics[width=4in,height=2.5in]{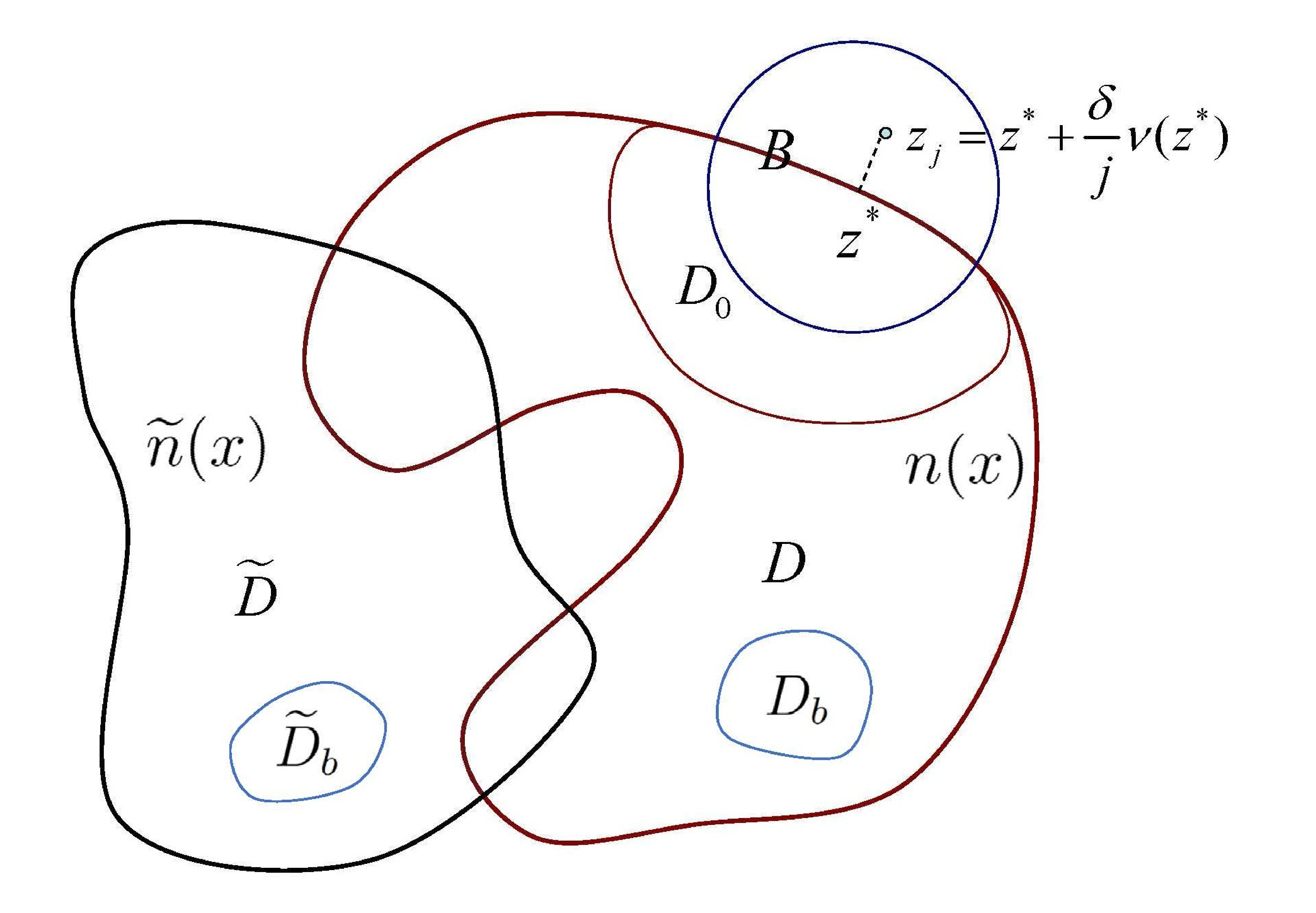}
  \caption{Two different scattering obstacles}\label{two-obstacles}
\end{figure}

(i) Let $\la\neq 1$ and let $(u_j,v_j)$, $(\wid{u_j},\wid{v_j})$ be the unique solution to
the transmission problem (\ref{1.1})-(\ref{1.4}) with respect to $D$ with refractive index
$n$ and the embedded obstacle $D_b$, $\wid{D}$ with refractive index $\wid{n}$
and the embedded obstacle $\wid{D}_b$, respectively, corresponding to the incident point source
\ben
u^i_j(x):=\Phi(x,z_j)=\frac{\exp({ik|x-z_j|})}{4\pi|x-z_j|},\qquad j=1,2,\ldots.
\enn
The assumption that $u^\infty(\wi{x};d)=\wid{u}^\infty(\wi{x};d)$ for all $\wi{x},d\in\Sp^2$,
together with Rellich's lemma and the denseness result \cite[Theorems 5.4 and 5.5]{CK}, implies that
\be\label{2.30}
u^s_j(x)=\wid{u}^s_j(x)\qquad\mbox{in}\;\;\ov{G},\qquad j=1,2,\ldots,
\en
where $G$ denotes the unbounded component of $\R^3\se\ov{(D\cup\wid{D})}$.

Since $z^*\in \pa D\se\pa\wid{D}$ and $\pa D\in C^2$, there is a small smooth ($C^2$) domain $D_0$
such that $B\cap D\subset D_0\subset D\se\ov{(\wid{D}\cup D_b)}$.
Define $U_j:=v_j$, $V_j:=\wid{u}_j$ in $D_0$. Then $(U_j,V_j)$ satisfies the
modified interior transmission problem (MITP) with $\Om=D_0$ and
\ben
&& \rho_1(j):=-(k^2n+1)v_j|_{D_0},\quad\quad \rho_2(j):=-(k^2+1)\wid{u}_j|_{D_0}, \\
&& f_1(j):=(v_j-\wid{u}_j)|_{\pa D_0},\quad\quad\quad
f_2(j):=(\la\pa v_j/\pa\nu-\pa\wid{u}_j/\pa\nu)|_{\pa D_0}.
\enn
From (\ref{2.30}) it is clear that $f_1(j)=f_2(j)=0$ on $\G_1:=\pa D_0\cap\pa D$. Since
$z^*$ has a positive distance from $\wid{D}$, the well-posedness of the transmission problem
(\ref{1.1})-(\ref{1.4}) implies that
\be\label{2.31}
\|\wid{u}_j^s\|_{H^2(D_0)}\leq C\quad\mbox{uniformly\;for}\;\;j\in\N.
\en
This implies that $\rho_2(j)\in L^2(D_0)$ is uniformly bounded for $j\in\N$
since $\Phi(\cdot,z_j)\in L^2(D_0)$ is uniformly bounded for $j\in\N$.
From Corollary \ref{co2.4} it is known that $\rho_1(j)$ is uniformly bounded in $L^2(D_0)$
for $j\in\N$.

We now prove that $f_1(j)$ and $f_2(j)$ are uniformly bounded in $H^{{1}/{2}}(\pa D_0)$
and $H^{-{1}/{2}}(\pa D_0)$, respectively, for $j\in\N$.
To this end, define $w_j:=v_j-\wid{u}_j^s-\Phi(\cdot,z_j)=v_j-\wid{u}_j$.
Then $w_j\in H^2(D\se\ov{(\wid{D}\cup D_b)})$ for every $j\in\N$ since $z_j\in\R^3\se\ov{D}$ and
is a solution to the problem
\ben
\tr w_j=g_j\quad\text{in}\;\;D\se\ov{(\wid{D}\cup D_b)},\quad\quad w_j|_{\G_1}=0,
\enn
where $g_j:=k^2[\Phi(\cdot,z_j)+\wid{u}_j^s-nv_j]\in L^2(D\se\ov{(\wid{D}\cup D_b)})$.
It follows from \cite[Theorem 9.13]{DT} that
\be\label{2.31a}
\|w_j\|_{H^2(D_0)}\le C\big(\|w_j\|_{L^2(D\se\ov{(\wid{D}\cup D_b)})}
+\|g_j\|_{L^2(D\se\ov{(\wid{D}\cup D_b)})}\big)\le C
\en
uniformly for $j\in\N$. Since $f_1(j)=w_j\big|_{\pa D_0}$ and
$$
f_2(j)=\la\pa w_j/\pa\nu+(\la-1)[\pa\wid{u}_j^s/\pa\nu+\pa\Phi(\cdot,z_j)/\pa\nu],
$$
it easily follows, by using (\ref{2.31}), (\ref{2.31a}) and the fact that
$f_1(j)|_{\G_1}=f_2(j)|_{\G_1}=0$, that $f_1(j)$ and $f_2(j)$ are uniformly
bounded in $H^{{1}/{2}}(\pa D_0)$ and $H^{-{1}/{2}}(\pa D_0)$, respectively, for $j\in\N$.
Thus, by Lemma \ref{le2.2} we have
\ben
\|\Phi(\cdot,z_j)\|_{H^1(D_0)}-\|\wid{u}^s_j\|_{H^1(D_0)}
\leq\|\wid{u}_j\|_{H^1(D_0)}=\|V_j\|_{H^1(D_0)}\leq C.
\enn
However, this is a contradiction since $\|\wid{u}^s_j\|_{H^1(D_0)}$ is uniformly
bounded and $\|\Phi(\cdot,z_j)\|_{H^1(D_0)}\rightarrow\infty$ as $j\rightarrow\infty$.
Hence, $D=\wid{D}$.

(ii) Consider the incident point source of higher-order:
\ben
u^i_j(x):=\nabla_x\Phi(x,z_j)\cdot\vec{a},\quad j=1,2,\ldots,
\enn
where $\vec{a}\in\R^3$ is a fixed vector, and let $(u_j,v_j)$ and $(\wid{u_j},\wid{v_j})$ be the
unique solution to the scattering problem (\ref{1.1+})-(\ref{1.4+}) with respect to $D$ with
refractive index $n$ and $\wid{D}$ with refractive index $\wid{n}$, respectively,
corresponding to the incident wave $u^i(x)=u^i_j(x).$
Similarly as in the proof of (i), by Rellich's lemma and the denseness
result \cite[Theorems 5.4 and 5.5]{CK}, it again follows, from the assumption
$u^\infty(\wi{x};d)=\wid{u}^\infty(\wi{x};d)$ for all $\wi{x},d\in\Sp^2$, that
\be\label{2.32}
u^s_j(x)=\wid{u}^s_j(x)\quad\text{in}\;\;\ov{G},\quad j=1,2,\ldots.
\en
Then $(U_j,V_j):=(v_j,\wid{u_j})$ satisfies the interior transmission problem (ITP) with $\Om=D_0$ and
\ben
f_1(j):=(v_j-\wid{u}_j)|_{\pa D_0},\qquad
f_2(j):=(\pa v_j/\pa\nu-\pa\wid{u}_j/\pa\nu)|_{\pa D_0}.
\enn
From (\ref{2.32}) it follows that $f_1(j)=f_2(j)=0$ in $\G_1$.

In order to utilize Corollary \ref{co2.1} to derive a contradiction,
we need to verify that $f_1(j),f_2(j)$ satisfy the condition (\textbf{C}).
To this end, we choose a cut-off function $\chi\in C^\infty_0(\R^3)$ such that
\ben
\chi(x)=\left\{\begin{array}{ll}
            0, & \text{in}\;\;\R^3\se B, \\
            1, & \text{in}\;\;B_1,
          \end{array}
        \right.
\enn
where $B_1$ is a small ball centered at $z^*$ satisfying that $B_1\subsetneq B$.
Define the function
\ben
w_0(j):=[1-\chi(x)](v_j-\wid{u}_j).
\enn
It is easy to see that $w_0(j)\in H^1_\tr(D_0)$ for every $j\in\N$
with $w_0(j)\big|_{\pa D_0}=f_1(j)$ and ${\pa w_0(j)}/{\pa\nu}\big|_{\pa D_0}=f_2(j)$.

We now prove that $\|w_0(j)\|_{H^1_\tr(D_0)}$ is uniformly bounded for $j\in\N.$
Since $z^*$ has a positive distance from $\wid{D}$, we have, by the well-posedness of
the scattering problem (\ref{1.1+})-(\ref{1.4+}), that
\be\label{2.33}
\|\wid{u}^s_j\|_{H^1_\tr(D_0)}\leq C
\en
uniformly for $j\in\N$. It further follows from Theorem \ref{thm2.5} that
\be\label{2.34}
\|v_j-u_j^i\|_{H^1(D_0)}\leq C
\en
uniformly for $j\in\N$. This, combined with (\ref{2.33}), yields that
\be\label{2.35}
\|w_0(j)\|_{H^1(D_0)}\le\|v_j-\wid{u}_j\|_{H^1(D_0)}=\|v_j-u^i_j-\wid{u}^s_j\|_{H^1(D_0)}\le C.
\en
It remains to prove that $\tr w_0(j)$ is uniformly bounded in $L^2(D_0)$.
By a direct computation, it is found that
\ben
\tr w_0(j)=\tr(1-\chi)(v_j-\wid{u}_j)+2\nabla(1-\chi)\cdot\nabla(v_j-\wid{u}_j)+(1-\chi)\tr(v_j-\wid{u}_j).
\enn
In view of (\ref{2.35}) the first and second terms are uniformly bounded in $L^2(D_0)$. Since
$\|u_j^i\|_{L^2(D_0\se\ov{B_1})}=\|\nabla\Phi(\cdot,z_j)\cdot\vec{a}\|_{L^2(D_0\se\ov{B_1})}\leq C$,
and by (\ref{2.33}) and (\ref{2.34}), we have
\ben
\tr(v_j-\wid{u}_j)=k^2[\wid{u}_j^s-n(v_j-u_j^i)]+k^2(1-n)u_j^i
\enn
is uniformly bounded in $L^2(D_0\se\ov{B_1})$ for $j\in\N.$
This, together with the fact that $\chi|_{B_1}=0$, implies that $\|\tr w_0(j)\|_{L^2(D_0)}\leq C$
uniformly for $j\in\N.$ This combined with (\ref{2.35}) gives the uniform boundedness of
$\|w_0(j)\|_{H^1_\tr(D_0)}$. Thus, $f_1(j),f_2(j)$ satisfy the condition (\textbf{C})
with $w_0(j)\in H^1_\tr(D_0)$ for every $j\in\N$.

It is well known that the smallest Dirichlet eigenvalue $\la_1(D_0)$ of $-\tr$ in $D_0$
tends to $+\infty$ as the diameter $\rho$ of $D_0$ goes to zero.
Thus, $D_0$ can be chosen such that its diameter $\rho$ is sufficiently small so
$k^2<\min\{\la_1(D_0),\la_1(D_0)/\sup(n)\}$. This, together with Corollary \ref{co2.1},
implies that $\|V_j\|_{L^2(D_0)}=\|\wid{u}_j\|_{L^2(D_0)}$ is uniformly bounded for $j\in\N.$
Thus,
\be\label{2.37}
\|\nabla_x\Phi(\cdot,z_j)\cdot\vec{a}\|_{L^2(D_0)}-\|\wid{u}_j^s\|_{L^2(D_0)}
\leq\|\wid{u}_j\|_{L^2(D_0)}\leq C
\en
uniformly for $j\in\N.$ On choosing $\vec{a}=\nu(z^*)$, it is easy to see that
\ben
\int_{D_0}|\nabla_x\Phi(\cdot,z_j)\cdot\nu(z^*)|^2dx
\ge \frac{C}{j^2}\int_{D_0}\frac{1}{|x-z_j|^6}=O(j).
\enn
This combined with (\ref{2.37}) leads to a contradiction as $j\rightarrow\infty$.
Thus, $D=\wid{D}$. The proof of the theorem is then completed.
\end{proof}

\begin{remark}\label{rm2.4}{\rm
(i) Theorem \ref{thm2.4} can be easily extended to the two-dimensional case.

(ii) In two-dimensions, the unique determination of $n$ is established in \cite{IUY10} if $D_b$ is known 
and is a sound-soft obstacle and $n\in C^{2+\al}(\ov{D\se{D_b}})$ with $\al>0$, 
whilst the unique determination of both $D_b$ and $n$ is proved in \cite{HP98} if $D_b$ is sound-soft and 
$n\in C^{1,\al}(\ov{\R^3\se\ov{D}})$ with $0<\al<1$, and $D$ can thus be uniquely determined in these two 
cases. However, the results in \cite{IUY10,HP98} do not apply to the case in this paper.
}
\end{remark}

\section{The inverse electromagnetic scattering problem}\label{sec3}
\setcounter{equation}{0}

In this section, we apply the technique introduced in Section \ref{sec2} to obtain similar
uniqueness results for inverse electromagnetic scattering by penetrable obstacles with embedded obstacles.

\subsection{Interior transmission problems}

In the case $\la_H=\mu_0/\mu_1\not=1$, consider the modified interior transmission problem (MITPM):
\be\label{3.111}
\curl\curl E_0+E_0=\xi_1 && \text{in}\;\;\Om,\\ \label{3.121}
\curl\curl F_0+F_0=\xi_2 &&  \text{in}\;\;\Om,\\ \label{3.131}
(F_0)_T-(E_0)_T=h_1\ti\nu &&  \text{on}\;\;\pa\Om,\\ \label{3.141}
\nu\ti\curl F_0-\la_H\nu\ti\curl E_0=h_2 && \text{on}\;\;\pa\Om,
\en
where $(\cdot)_T=\nu\ti(\cdot)\ti\nu$, $\xi_1,\xi_2\in L^2(\Om)^3$ and $h_1,h_2\in Y(\pa\Om)$.
Here,
\ben
Y(\pa\Om)=\{\textbf{u}\in H^{-{1}/{2}}(\pa\Om):\;\nu\cdot\textbf{u}=0,\;
\text{Div}(\textbf{u})\in H^{-{1}/{2}}(\pa\Om)\}
\enn
denotes the trace space of $H(\curl,\Om)$. For the problem (MITPM), we have the following well-posedness
result.

\begin{lemma}\label{le3.2}(\cite[Theorem 3.3]{CC0})
If $\la_H\neq 1$, then the modified interior transmission problem (MITPM) admits a unique
solution $(E_0,F_0)\in H(\curl,\Om)\times H(\curl,\Om)$ satisfying the estimate
\ben
\|E_0\|_{H(\curl,\Om)}+\|F_0\|_{H(\curl,\Om)}
\le C\big(\|\xi_1\|_{L^2(\Om)^3}+\|\xi_2\|_{L^2(\Om)^3}+\|h_1\|_{Y(\pa\Om)}+\|h_2\|_{Y(\pa\Om)}\big).
\enn
\end{lemma}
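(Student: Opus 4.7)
The plan is to decouple the system (\ref{3.111})--(\ref{3.141}) into two standard boundary value problems for the operator $\curl\curl+I$, with the hypothesis $\la_H\neq 1$ entering precisely at the decoupling step. Introduce the difference $\textbf{u}:=F_0-E_0$; subtracting (\ref{3.111}) from (\ref{3.121}) gives
\ben
\curl\curl\textbf{u}+\textbf{u}=\xi_2-\xi_1\quad\text{in}\;\;\Om,
\enn
while the first jump condition (\ref{3.131}) supplies the tangential Dirichlet data $\textbf{u}_T=h_1\ti\nu$ on $\pa\Om$. Since $h_1\in Y(\pa\Om)$ is exactly the tangential trace space of $H(\curl,\Om)$, there is a continuous lifting $\textbf{u}_*\in H(\curl,\Om)$ realising the boundary data. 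Writing $\textbf{u}=\textbf{u}_0+\textbf{u}_*$ with $\textbf{u}_0\in H_0(\curl,\Om)$, the variational formulation
\ben
\int_\Om(\curl\textbf{u}_0\cdot\curl\ov{\textbf{v}}+\textbf{u}_0\cdot\ov{\textbf{v}})dx
=\int_\Om(\xi_2-\xi_1)\cdot\ov{\textbf{v}}dx
-\int_\Om(\curl\textbf{u}_*\cdot\curl\ov{\textbf{v}}+\textbf{u}_*\cdot\ov{\textbf{v}})dx
\enn
for all $\textbf{v}\in H_0(\curl,\Om)$ has a coercive left-hand side, so Lax-Milgram yields a unique $\textbf{u}\in H(\curl,\Om)$ with $\|\textbf{u}\|_{H(\curl,\Om)}\le C(\|\xi_1\|_{L^2(\Om)^3}+\|\xi_2\|_{L^2(\Om)^3}+\|h_1\|_{Y(\pa\Om)})$.

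Next, substituting $F_0=E_0+\textbf{u}$ into the second jump condition (\ref{3.141}) gives $\nu\ti\curl\textbf{u}+(1-\la_H)\nu\ti\curl E_0=h_2$ on $\pa\Om$. This is the only place where $\la_H\neq 1$ is used: we can solve for
\ben
\nu\ti\curl E_0=g:=\frac{\nu\ti\curl\textbf{u}-h_2}{\la_H-1}.
\enn
The equation for $\textbf{u}$ gives $\curl\curl\textbf{u}=\xi_2-\xi_1-\textbf{u}\in L^2(\Om)^3$, so $\curl\textbf{u}\in H(\curl,\Om)$, and hence $\nu\ti\curl\textbf{u}\in Y(\pa\Om)$ with norm controlled by the data; thus $g\in Y(\pa\Om)$. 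Then $E_0$ is to solve $\curl\curl E_0+E_0=\xi_1$ in $\Om$ with the natural boundary condition $\nu\ti\curl E_0=g$ on $\pa\Om$, whose weak form
\ben
\int_\Om(\curl E_0\cdot\curl\ov{\phi}+E_0\cdot\ov{\phi})dx=\int_\Om\xi_1\cdot\ov{\phi}dx+\langle g,\phi_T\rangle_{\pa\Om}\quad\forall\phi\in H(\curl,\Om)
\enn
is again coercive on $H(\curl,\Om)$, so Lax-Milgram provides the unique $E_0$ with the required estimate; setting $F_0:=E_0+\textbf{u}\in H(\curl,\Om)$ completes the pair, and one checks directly that $(E_0,F_0)$ solves (MITPM).

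The main obstacle is the boundary-trace bookkeeping in the decoupling step: one must verify that the right-hand side $g$ of the recovered natural boundary condition for $E_0$ truly lives in $Y(\pa\Om)$, and this rests on reading off $\curl\curl\textbf{u}\in L^2(\Om)^3$ from the equation itself and invoking the trace theorem for $H(\curl,\Om)$. Uniqueness is immediate by running the same construction with zero data. Structurally the argument mirrors the acoustic Lemma \ref{le2.2}, with $\curl\curl+I$ replacing $-\Delta+I$ and $Y(\pa\Om)$ replacing $H^{\pm 1/2}(\pa\Om)$; the sign-definiteness of the $\curl\curl+I$ form on $H(\curl,\Om)$ is what makes both Lax-Milgram applications straightforward.
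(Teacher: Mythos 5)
Your argument is correct, and it is worth noting that the paper itself offers no proof of this lemma at all: it is simply quoted from \cite[Theorem 3.3]{CC0}, where the result is established (in the more general anisotropic setting) via a \emph{coupled} variational formulation for the pair $(E_0,F_0)$, whose coercivity is where the hypothesis on $\la_H$ enters. Your route is genuinely different and, for the isotropic constant-coefficient problem at hand, simpler: because both equations carry the same positive-definite operator $\curl\curl+I$, the difference $\textbf{u}=F_0-E_0$ satisfies a single equation with tangential Dirichlet data $h_1\ti\nu$, and can be solved first by Lax--Milgram with the $H(\curl,\Om)$ inner product; the condition $\la_H\neq1$ is then used only algebraically, to solve the second jump condition for $\nu\ti\curl E_0$ and reduce $E_0$ to a standard coercive Neumann-type problem. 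The key technical point --- that $g=(\nu\ti\curl\textbf{u}-h_2)/(\la_H-1)$ lies in $Y(\pa\Om)$ --- you justify correctly by reading $\curl\curl\textbf{u}=\xi_2-\xi_1-\textbf{u}\in L^2(\Om)^3$ off the equation and applying the tangential trace theorem for $H(\curl,\Om)$. Two cosmetic caveats: the sign of the boundary pairing $\langle g,\phi_T\rangle$ in your weak formulation for $E_0$ depends on the orientation convention for $\nu\ti(\cdot)$ versus $(\cdot)\ti\nu$ and should be fixed consistently with the Green identity $\int_\Om(\curl\curl E\cdot\ov{\phi}-\curl E\cdot\curl\ov{\phi})\,dx=\langle\nu\ti\curl E,\ov{\phi}_T\rangle$; and the lifting of $h_1\ti\nu$ lands in the range of the tangential-\emph{components} trace $\gamma_T$ (the rotation of $Y(\pa\Om)$ by $\ti\nu$), not in $Y(\pa\Om)$ itself --- harmless here since $\pa\Om\in C^2$, but worth stating. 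Neither point affects the validity of the proof. The trade-off is that your decoupling would not survive the anisotropic generalization treated in \cite{CC0}, where the two equations carry different coefficient matrices and the difference field no longer satisfies a single clean equation.
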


For the case $\la_H=\mu_0/\mu_1=1$, we consider the interior transmission problem (ITPM):
\be\label{3.1}
\curl\curl E_0-k^2E_0=0 && \text{in}\;\;\Om,\\ \label{3.2}
\curl\curl F_0-k^2n(x)F_0=0 && \text{in}\;\;\Om,\\ \label{3.3}
F_0\times\nu-E_0\times\nu=h_1 && \text{on}\;\;\pa\Om,\\ \label{3.4}
\curl F_0\ti\nu-\curl E_0\ti\nu=h_2 && \text{on}\;\;\pa\Om.
\en
To study the well-posedness of the above problem (ITPM), we introduce the Hilbert spaces
\ben
&&H(\curl,\Om):=\{\textbf{u}\in L^2(\Om)^3:\;\curl\textbf{u}\in L^2(\Om)^3\},\\
&&H_0(\curl,\Om):=\{\textbf{u}\in H(\curl,\Om):\;\textbf{u}\times\nu=0\}
\enn
with the inner product
$(\textbf{u},\textbf{v})_{\curl}=(\textbf{u},\textbf{v})_{L^2}+(\curl\textbf{u},\curl\textbf{v})_{L^2}$,
and the Hilbert spaces
\ben
&&\mathcal{U}(\Om):=\{\textbf{u}\in H(\curl,\Om):\;\curl\textbf{u}\in H(\curl,\Om)\},\\
&&\mathcal{U}_0(\Om):=\{\textbf{u}\in H_0(\curl,\Om):\;\curl\textbf{u}\in H_0(\curl,\Om)\}
\enn
with the inner product
$(\textbf{u},\textbf{v})_{\mathcal{U}}=(\textbf{u},\textbf{v})_{\curl}
+(\curl\textbf{u},\curl\textbf{v})_{\curl}$.

For the data $(h_1,h_2)$ we need the following assumption (\textbf{H}): the data $(h_1,h_2)$ satisfies
the property that there always exists a $\textbf{w}\in\mathcal{U}(\Om)$ such that
\be\label{3.5}
\textbf{w}\times\nu=h_1,\qquad\curl\textbf{w}\times\nu=h_2\quad\text{on}\;\;\pa\Om.
\en
Define the set $\tau(\pa\Om)$ which consists of $(h_1,h_2)$ satisfying the property (\ref{3.5})
and is equipped with the norm
\ben
\|(h_1,h_2)\|_{\tau(\pa\Om)}
:=\inf\{\|\textbf{w}\|_{\mathcal{U}(\Om)}\;:\;\textbf{w}\times\nu=h_1,\;\;
\curl\textbf{w}\times\nu=h_2\quad\text{on}\;\;\pa\Om\}.
\enn

\begin{definition}\label{def1}
If $(E_0,F_0)\in L^2(\Om)^3\times L^2(\Om)^3$ satisfies the interior transmission problem (ITPM)
in the distribution sense such that $F_0-E_0\in\mathcal{U}(\Om)$, then $(E_0,F_0)$ is called a weak
solution of the interior transmission problem (ITPM).
\end{definition}

Let $\textbf{u}:=F_0-E_0$. Then $\textbf{u}$ satisfies
\be\label{3.6}
(\curl\curl-k^2n)\frac{1}{n-1}(\curl\curl-k^2)\textbf{u}=0&&\text{in}\;\;\Om,\\ \label{3.7}
\textbf{u}\ti\nu=h_1,\;\;\curl\textbf{u}\ti\nu=h_2&&\text{on}\;\;\pa\Om.
\en
It is easy to see that the interior transmission problem (ITPM) is equivalent to the
variational problem: Find $\textbf{u}\in\mathcal{U}$ with the boundary condition (\ref{3.7})
such that
\be\label{3.8}
b(\textbf{u},\textbf{v})=0\quad\text{for\;all\;}\;\;\textbf{v}\in\mathcal{U}_0(\Om),
\en
where
\ben
b(\textbf{u},\textbf{v})=\int_{\Om}\frac{1}{n-1}(\curl\curl\textbf{u}-k^2\textbf{u})
\cdot(\curl\curl\ov{\textbf{v}}-k^2n\ov{\textbf{v}})dx.
\enn
Let $\wid{\textbf{u}}:=\textbf{u}-\textbf{w}$. Then $\wid{\textbf{u}}\in\mathcal{U}_0(\Om)$ and
(\ref{3.8}) is equivalent to the problem
\be\label{3.9}
b(\wid{\textbf{u}},\textbf{v})=-b(\textbf{w},\textbf{v})\quad\text{for\;all\;}\;\;
\textbf{v}\in\mathcal{U}_0(\Om).
\en
Based on (\ref{3.9}), the following result can be obtained (see \cite{CGH}).

\begin{lemma}\label{le3.1}(\cite[Lemma 2.9]{CGH})
If $n>1+r_0$ or $0<n<1-r_1$ with some constants $r_0,r_1>0$, then
\ben
b(\wid{\textbf{u}},\wid{\textbf{u}})\geq C\|\wid{\textbf{u}}\|^2_{\mathcal{U}(\Om)}
\enn
for $0<k^2<\min\{\la_1(\Om),\la_1(\Om)/\sup(n)\}$,
where $\la_1(\Om)$ is the smallest Dirichlet eigenvalue of $-\tr$ in $\Om$.
\end{lemma}

By Lemma \ref{le3.1} we have the following corollary.

\begin{corollary}\label{co3.1}
For any fixed $k>0$, if the diameter of $\Om$ is small enough (so $\la_1(\Om)$ is
large enough) so that $k^2<\min\{\la_1(\Om),\la_1(\Om)/\sup(n)\}$,
then the interior transmission problem (ITPM)
has a unique solution $(E_0,F_0)\in L^2(\Om)^3\times L^2(\Om)^3$ with
\be\label{3.10}
\|E_0\|_{L^2(\Om)^3}+\|F_0\|_{L^2(\Om)^3}\le C\|(h_1,h_2)\|_{\tau(\pa\Om)}.
\en
\end{corollary}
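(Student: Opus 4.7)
The plan is to mimic the scalar argument of Corollary~\ref{co2.1} with $H^2_0(\Om)$ replaced by $\mathcal{U}_0(\Om)$ and the Helmholtz fourth-order formulation replaced by its Maxwell analogue (\ref{3.6})--(\ref{3.7}). Under the smallness hypothesis $k^2<\min\{\la_1(\Om),\la_1(\Om)/\sup(n)\}$, Lemma~\ref{le3.1} delivers coercivity of the sesquilinear form $b(\cdot,\cdot)$ on $\mathcal{U}_0(\Om)$. Continuity of $b$ on $\mathcal{U}(\Om)\times\mathcal{U}(\Om)$ is immediate from the definitions of the norms (each factor in the integrand is controlled by the $\mathcal{U}$-norm of its argument). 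Hence the Lax--Milgram theorem applies to the variational problem (\ref{3.9}): for any lifting $\textbf{w}\in\mathcal{U}(\Om)$ of the data $(h_1,h_2)$ under assumption (\textbf{H}), there is a unique $\wid{\textbf{u}}\in\mathcal{U}_0(\Om)$ with
\[
\|\wid{\textbf{u}}\|_{\mathcal{U}(\Om)}\le C\|\textbf{w}\|_{\mathcal{U}(\Om)}.
\]
Taking the infimum over admissible liftings $\textbf{w}$ and using the definition of $\|\cdot\|_{\tau(\pa\Om)}$ yields $\|\wid{\textbf{u}}\|_{\mathcal{U}(\Om)}\le C\|(h_1,h_2)\|_{\tau(\pa\Om)}$.

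Next, set $\textbf{u}:=\wid{\textbf{u}}+\textbf{w}\in\mathcal{U}(\Om)$, which solves the fourth-order problem (\ref{3.6})--(\ref{3.7}) in the distributional sense. Define
\[
F_0:=\frac{1}{k^2(n-1)}\bigl(\curl\curl\textbf{u}-k^2\textbf{u}\bigr),\qquad E_0:=F_0-\textbf{u}.
\]
Because $\textbf{u}\in\mathcal{U}(\Om)$ one has $\curl\curl\textbf{u}\in L^2(\Om)^3$, so $F_0,E_0\in L^2(\Om)^3$. A direct manipulation of (\ref{3.6}) shows $\curl\curl F_0-k^2n\,F_0=0$ in $\Om$, and subtracting the identity $(n-1)F_0=k^{-2}(\curl\curl\textbf{u}-k^2\textbf{u})$ from the equation for $F_0$ yields $\curl\curl E_0-k^2E_0=0$ in $\Om$, i.e.\ (\ref{3.1})--(\ref{3.2}). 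The boundary conditions (\ref{3.3})--(\ref{3.4}) follow from $F_0-E_0=\textbf{u}$ and the traces $\textbf{u}\times\nu=h_1$, $\curl\textbf{u}\times\nu=h_2$ on $\pa\Om$, which are inherited from (\ref{3.5}) since $\wid{\textbf{u}}\in\mathcal{U}_0(\Om)$. Combining with the $\mathcal{U}$-estimate above gives (\ref{3.10}).

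For uniqueness, if $(E_0,F_0)$ solves ITPM with $h_1=h_2=0$, then $\textbf{u}:=F_0-E_0\in\mathcal{U}_0(\Om)$ satisfies (\ref{3.6}) in the distributional sense and hence the homogeneous variational identity (\ref{3.9}). Testing with $\textbf{v}=\textbf{u}$ and invoking Lemma~\ref{le3.1} forces $\textbf{u}=0$, i.e.\ $E_0=F_0$ in $\Om$. Substituting back into (\ref{3.1})--(\ref{3.2}) yields $k^2(n-1)F_0=0$, so $F_0=E_0=0$ almost everywhere in $\Om$ (recalling that $n>1+r_0$ or $0<n<1-r_1$ by the hypothesis of Lemma~\ref{le3.1}).

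The main delicate point will be the verification that the reconstruction $\textbf{u}\mapsto(E_0,F_0)$ is consistent with Definition~\ref{def1}: one must check that the distributional identity (\ref{3.6}) really does translate into the two second-order Maxwell equations for $E_0$ and $F_0$ individually, despite the low regularity $(E_0,F_0)\in L^2(\Om)^3\times L^2(\Om)^3$. This is handled by interpreting $\curl\curl F_0$ and $\curl\curl E_0$ in the sense of distributions, noting that $F_0$ and $E_0$ differ from $-\textbf{u}/(k^2(n-1))$-type expressions by $L^2$-terms, and that $\textbf{u}\in\mathcal{U}(\Om)$ provides exactly enough regularity for the fourth-order equation to be factored. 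Everything else is a direct transcription of the argument of Corollary~\ref{co2.1}.
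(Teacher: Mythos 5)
Your proof is correct and follows essentially the same route as the paper: coercivity from Lemma \ref{le3.1} plus the Lax--Milgram theorem applied to (\ref{3.9}), the infimum over admissible liftings $\textbf{w}$ to pass to the $\tau(\pa\Om)$-norm, and the reconstruction of $F_0$ from $(\curl\curl-k^2)\textbf{u}$ with $E_0:=F_0-\textbf{u}$. Your normalization $F_0=\frac{1}{k^2(n-1)}(\curl\curl\textbf{u}-k^2\textbf{u})$ is in fact the one that makes $E_0$ satisfy (\ref{3.1}) (the paper's formula omits the factor $1/k^2$), and your explicit verification of the factored equations and of uniqueness fills in details the paper leaves implicit.
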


\begin{proof}
It is clear that $-b(\textbf{w},\textbf{v})$ defines a bounded, linear functional
on $\mathcal{U}_0(\Om)$. By Lemma \ref{le3.1} and the Lax-Milgram theorem, it is easy to see
that there exists a unique solution $\wid{\textbf{u}}\in\mathcal{U}_0(\Om)$ such that
\ben
\|\wid{\textbf{u}}\|_{\mathcal{U}_0(\Om)}\leq C \|\textbf{w}\|_{\mathcal{U}(\Om)},
\enn
where $C>0$ is independent of the choice of $\textbf{w}$.
This, combined with the fact that $\textbf{u}=\wid{\textbf{u}}+\textbf{w}$, gives
\ben
\|\textbf{u}\|_{\mathcal{U}(\Om)}\leq (C+1)\|\textbf{w}\|_{\mathcal{U}(\Om)},
\enn
which implies that
\ben
\|\textbf{u}\|_{\mathcal{U}(\Om)}\le (C+1)\inf_{\textbf{w}}\|\textbf{w}\|_{\mathcal{U}(\Om)}
=(C+1)\|(h_1,h_2)\|_{\tau(\pa\Om)}.
\enn
Define $F_0:=[{1}/({n-1})](\curl\curl-k^2)\textbf{u}$ and $E_0:=F_0-\textbf{u}$. Then
$(E_0,F_0)\in L^2(\Om)^3\times L^2(\Om)^3$ and satisfies the the interior transmission problem (ITPM)
with the estimate (\ref{3.10}).
\end{proof}

\begin{remark}\label{rm3.1}{\rm
Similarly as in the acoustic case, our uniqueness result for the case $\la_H=\mu_0/\mu_1\not=1$ is
associated with the well-posed problem (MITPM), while the uniqueness result for the case $\la_H=\mu_0/\mu_1=1$
is associated with the much harder problem (ITPM) which may not be well-posed for a given wavenumber $k$
if $\Om$ is not small enough (see Lemma \ref{le3.1} and Corollary \ref{co3.1}). This is the reason why
all the previous methods for the uniqueness proofs of the inverse problems require the assumption that
$\la_H=\mu_0/\mu_1\not=1$.
}
\end{remark}

\subsection{A priori estimates for the forward scattering problem with $L^p$ data}

We now establish a priori estimates of solutions to the electromagnetic transmission problem
(\ref{1.6})-(\ref{1.9}) or the electromagnetic scattering problem (\ref{1.6+})-(\ref{1.9+})
with $L^p$ data $(1<p<2)$, which are needed in the uniqueness proofs.

Introduce the magnetic dipole operator $M_{\rm{e}\rm{e}}$ and the electric dipole operator $N_{\rm{e}\rm{e}}$ by
\ben
(M_{\rm{e}\rm{e}}a)(x)&=&\int_{\pa D}\nu(x)\times\curl_x\{a(y)\Phi(x,y)\}ds(y),\quad x\in\pa D,\\
(N_{\rm{e}\rm{e}}b)(x)&=&\nu(x)\times\curl\curl\int_{\pa D}\nu(y)\times b(y)\Phi(x,y)ds(y),\quad x\in\pa D.
\enn
Similarly, we also introduce the operators $M_{1,\rm{e}\rm{e}}$ and $N_{1,\rm{e}\rm{e}}$
which are defined as $M_{\rm{e}\rm{e}}$ and $N_{\rm{e}\rm{e}}$ with
the kernel $\Phi(x,y)$ replaced by $\Phi_1(x,y)$ as well as the operators $\mathscr{M}_{\rm{t}\rm{h}}$
for ${\rm t,h=e,i}$, respectively, and $\mathscr{M}=M, M_1,N$ and $N_1$ which are defined, for example,
as $M_{\rm{e}\rm{i}}$ on $\pa D$ but with $x\in\pa D_b$.

\begin{theorem}\label{thm3.1}
Assume that $\la_H=\mu_0/\mu_1\not=1$ and $n\in L^\infty(D\se\ov{D_b})$.
For $z^*\in\pa D$ let $B_{z^*}$ be a small ball centered at $z^*$.
Let $z\in B_{z^*}\cap(\R^3\se\ov{D})$ and let $(E,G)$ be the solution of the transmission problem
$(\ref{1.6})-(\ref{1.9})$ corresponding to the incident magnetic dipole
$E^i(x)=\curl(p\Phi(x,z))/\|\curl(p\Phi(x,z))\|_{L^2(\pa D)}$.
Then $G\in L^2(D\se\ov{D_b})^3\cap H(\curl,D\se\ov{D_b\cup B_{z^*}})$ with
\be\label{3.151}
\|G\|_{L^2(D\se\ov{D_b})^3}+\|G\|_{H(\curl,D\se\ov{D_b\cup B_{z^*}})}\leq C,
\en
where $C>0$ is a constant and independent of $z$.
\end{theorem}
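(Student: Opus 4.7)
The plan is to parallel the proof of Theorem~\ref{thm2.3} and Corollary~\ref{co2.4} from the acoustic setting, now working with a M\"uller-type system of boundary integral equations in place of the scalar jump relations. First, I would reduce to the case of constant refractive index $k^2n(x)\equiv k_1^2$ in $D$ by the same perturbation trick as Step~2 in the proof of Theorem~\ref{thm2.3}: if $\wid{G}$ denotes the solution in the constant case, then $W:=G-\wid{G}$ solves the Maxwell transmission problem (with $\la_H\neq 1$) with vanishing boundary data and $L^2$-source $(k_1^2-k^2n(x))\wid{G}\in L^2(D)^3$, which is uniquely solvable in $H(\curl)$ with the expected estimate by the standard variational theory. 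It thus suffices to establish (\ref{3.151}) when $n$ is constant on $D$.

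In the constant case, I would represent $(E,G)$ in M\"uller form, writing $E$ in $\R^3\se\ov{D}$ and $G$ in $D$ as suitable linear combinations of $M$-type and $N$-type surface potentials built with the kernels $\Phi(x,y)$ and $\Phi_1(x,y)$ respectively, coupled by a single pair of tangential densities $(a,b)\in L^p(\pa D)^2$ with $1<p<2$, the coefficients in the ansatz being chosen so that the principal singular parts of $M-\la_H M_1$ and of $N-\la_H^{-1}N_1$ cancel pairwise. Imposing the transmission conditions (\ref{1.8}) and using the jump relations for $M,M_1,N,N_1$, the problem reduces to a Fredholm system $(I+\mathcal{L})(a,b)^T=(g_1,g_2)^T$ in $L^p(\pa D)^2$ with $\mathcal{L}$ compact by \cite[Lemma 9]{P0} and \cite[Lemma 1]{P1}; combined with uniqueness of the direct problem (\ref{1.6})--(\ref{1.9}), the Fredholm alternative yields $\|a\|_{L^p(\pa D)}+\|b\|_{L^p(\pa D)}\le C(\|g_1\|_{L^p(\pa D)}+\|g_2\|_{L^p(\pa D)})$. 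The same duality computation as in (\ref{2.20})--(\ref{2.21})---using boundedness $L^2(D)\to W^{2,2}(D)$ of the volume potential and the boundary trace $W^{1,2}(D)\to L^q(\pa D)$---converts this into $\|G\|_{L^2(D)^3}\le C(\|g_1\|_{L^p(\pa D)}+\|g_2\|_{L^p(\pa D)})$, and on $D\se\ov{B_{z^*}}$ the kernel $\Phi_1(\cdot,y)$ together with all its derivatives are smooth uniformly in $y\in\pa D$, so $\|G\|_{H(\curl,D\se\ov{B_{z^*}})}$ is controlled by the same boundary quantity.

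To bound $(g_1,g_2)$ uniformly in $z\in B_{z^*}\se\ov{D}$, I would adapt the reflection trick from Corollary~\ref{co2.4}: let $y$ denote the image of $z$ reflected across $\pa D$ near $z^*$ (so $y\in D$), subtract a corresponding image magnetic dipole $\curl(p\Phi_1(\cdot,y))$ from the interior ansatz, and observe that on $\pa D$ the leading $|x-z|^{-2}$ singularity of $\curl(p\Phi(\cdot,z))$ cancels against that of its image up to a tangential remainder. Together with the normalization factor $\|\curl(p\Phi(\cdot,z))\|_{L^2(\pa D)}$ built into $E^i$, this remainder and the companion surface trace involving $\nabla\nabla\Phi(\cdot,z)$ remain bounded in $L^p(\pa D)$ for $1<p<2$, which is exactly the range on which the integral system of the previous paragraph is well-posed.

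The hardest step is this last one. A direct $L^2(\pa D)$-bound is not available: the chosen normalization keeps $\|E^i\|_{L^2(\pa D)}=1$, but a direct computation shows that $\|\nu\ti\curl E^i\|_{L^2(\pa D)}$ blows up like $j$ as $z=z_j\to z^*$, so the second tangential trace is $L^2$-unbounded. One must therefore identify the correct image dipole and the right exponent $p\in(1,2)$ so that the image cancellation survives the normalization in \emph{both} tangential boundary data and produces a uniform $L^p$-bound. Once this is verified, combining the three preceding steps delivers (\ref{3.151}).
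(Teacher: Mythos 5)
Your overall architecture (Fredholm analysis of a boundary-integral ansatz for a constant-index problem, followed by a variational perturbation to general $n$) matches the paper's, and your reduction to $k^2n\equiv k_1^2$ is exactly the paper's Step 2. But your two decisive technical choices diverge from the paper, and each contains a genuine gap. On the cancellation device: you propose reflecting $z$ to an image point $y\in D$ and subtracting an image dipole, and you leave as an unverified ``hardest step'' the claim that this gains enough singular order in the tangential traces of $\curl\curl(p\Phi(\cdot,z))$ (which blow up like $|x-z|^{-3}$) to land in $L^p(\pa D)$ uniformly. The paper avoids this entirely: it subtracts $(\mu_1/\mu_0)\,\curl(p\Phi_1(\cdot,z))/\|\curl(p\Phi_1(\cdot,z))\|_{L^2(\pa D)}$ from the \emph{interior} field --- same source point $z$, but the interior wavenumber $k_1$ --- so that $f_2$ involves essentially $\curl\curl\bigl(p\,(\Phi_1-\Phi)(\cdot,z)\bigr)$; since $\Phi-\Phi_1$ is continuous with second derivatives of order $|x-z|^{-1}$, and since the built-in normalization $\|\curl(p\Phi(\cdot,z))\|_{L^2(\pa D)}\sim \mathrm{dist}(z,\pa D)^{-1}$ absorbs the remainder, the data $f_1,f_2$ are uniformly bounded in $L^2$-based tangential spaces. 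Your premise that ``a direct $L^2(\pa D)$-bound is not available'' is what pushes you toward $L^p$; with the right subtraction it \emph{is} available, and the whole argument runs in $T^2(\pa D)\times T^2_d(\pa D)$.

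The second gap is the function space itself. Taking both densities merely in $L^p(\pa D)$, $1<p<2$, is not adequate for the electric-dipole part of the ansatz: $N$ is hypersingular, and the potential $\curl\curl\int_{\pa D}b\,\Phi_1\,ds=k_1^2\int_{\pa D}b\,\Phi_1\,ds+\nabla\dive\int_{\pa D}b\,\Phi_1\,ds$ cannot be placed in $L^2(D)^3$, let alone $H(\curl,D)$, without controlling the surface divergence of $b$ (one must integrate by parts on $\pa D$ to move the extra derivative onto $b$). This is precisely why the paper poses the system in $T^2(\pa D)\times T^2_d(\pa D)$ and verifies $f_2\in T^2_d(\pa D)$ via $\dive_{\pa D}(\nu\ti\curl E^i)=-\nu\cdot\curl\curl E^i=-k^2\nu\cdot E^i$, which is again uniformly bounded thanks to the normalization; your duality computation modeled on the scalar single- and double-layer case does not transfer to the $\nabla\dive$ term. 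You also omit the final localization the paper needs: globally $a$ lies only in $T^2(\pa D)$, so the magnetic-dipole potential is only in $L^2(D)^3$; to obtain the $H(\curl,D\se\ov{B_{z^*}})$ half of the estimate in Theorem \ref{thm3.1} one must show $a\in T^2_d$ away from $z^*$ (where $f_1$ is smooth) and split the potential with a cutoff.
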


\begin{proof}
To prove (\ref{3.151}), define $G_1(x):=G(x)-({\mu_1}/{\mu_0})E_1^i(x)$ for $x\in D\se\ov{D_b}$
with $E_1^i(x):=\curl(p\Phi_1(x,z))/\|\curl(p\Phi_1(x,z))\|_{L^2(\pa D)}$ with $k_1^2\neq k^2$.
Then $E^s|_{\R^3\se\ov{D}}$ and $G_1|_D$ satisfy the transmission problem (TP1):
\be\label{3.161}
\curl\curl E^s-k^2E^s=0 && \text{in}\;\;\R^3\se\ov{D},\\ \label{3.171}
\curl\curl G_1-k^2n(x)G_1=g&& \text{in}\;\;D\se\ov{D_b},\\ \label{3.181}
\nu\times E^s-\nu\times G_1=f_1&& \text{on}\;\;\pa D,\\ \label{3.191}
\nu\times\curl E^s-\frac{\mu_0}{\mu_1}\nu\times\curl G_1=f_2&& \text{on}\;\;\pa D,\\ \label{3.191a}
\mathscr{B}_E(G_1)=f_3 && \text{on}\;\;\pa D_b,
\en
with the radiation condition (\ref{1.9}), where
$f_1:=({\mu_1}/{\mu_0})\nu\times E_1^i|_{\pa D}-\nu\times E^i|_{\pa D}$,
$f_2:=\nu\times\curl E_1^i|_{\pa D}-\nu\times\curl E^i|_{\pa D}$,
$f_3:=-({\mu_0}/{\mu_1})\mathscr{B}_E(E^{\rm i}_1)$ and $g=({\mu_1}/{\mu_0})(k^2n-k_1^2)E^i_1$.
For convenience, we only consider a perfect conductor condition on $\pa D_b$, i.e.,
$\mathscr{B}_E(G_1)=\nu\times G_1$. The same result can be similarly extended to an impedance
condition on $\pa D_b$.

Step 1. We first consider the case $k^2n(x)\equiv k_1^2$. In this case, $g=0$, and
we seek the solution of the problem (TP1) in the form
\ben
E^s(x)&=&\mu_0\curl\int_{\pa D}a(y)\Phi(x,y)ds(y)
      +\curl\curl\int_{\pa D}b(y)\Phi(x,y)ds(y),\quad x\in\R^3\se\ov{D},\\
G_1(x)&=&\mu_1\curl\int_{\pa D}a(y)\Phi_1(x,y)ds(y)
      +\curl\curl\int_{\pa D}b(y)\Phi_1(x,y)ds(y)\\
      &&+\curl\int_{\pa D_b}c(y)\Phi_1(x,y)ds(y)
           +{\rm i}\curl\curl\int_{\pa D_b} (RS_0^2 c)(y)\Phi_1(x,y)ds(y),\; x\in D\se\ov{D_b},
\enn
with tangential fields $a$ and $b$. Here, $Rh:=(\nu\times h)\times \nu$ stands for the projection of
a vector defined on $\pa D_b$ onto the tangent plane and $S_0$ denotes the single-potential operator
defined on $\pa D_b$ by $S_{\rm{i}\rm{i}}$ with the wave number $k=0$.
Then the problem (TP1) is equivalent to the following system of integral equations:
\be\label{3.201}
\frac{1}{2}\ov{\mu}a+(\mu_0M_{\rm{e}\rm{e}}-\mu_1M_{1,\rm{e}\rm{e}})a
 +(N_{\rm{e}\rm{e}}-N_{1,\rm{e}\rm{e}})Pb -A_{\rm{i}\rm{e}}c=f_1,\\ \label{3.211}
\frac{1}{2}\wid{\mu}b+\mu_0(N_{\rm{e}\rm{e}}-N_{1,\rm{e}\rm{e}})Pa
  +(k^2M_{\rm{e}\rm{e}}-\frac{\mu_0}{\mu_1}k^2_1M_{1,\rm{e}\rm{e}})b-B_{\rm{i}\rm{e}}c=f_2,\\ \label{3.211a}
\frac{1}{2}\mu_0 c+\mu_0(M_{1,\rm{i}\rm{i}}+{\rm i}N_{1,\rm{i}\rm{i}}PRS^2_0)c
  +\mu_0\mu_1 M_{1,\rm{e}\rm{i}}a +\mu_0 N_{1,\rm{e}\rm{i}}Pb=f_3,
\en
with $Ph:=h\times \nu$, $\wid{\mu}:=(k^2_1\mu_0/\mu_1+k^2)$, $\ov{\mu}:=(\mu_0+\mu_1)$,
$A_{\rm{i}\rm{e}}:=(M_{1,\rm{i}\rm{e}}+{\rm i}N_{1,\rm{i}\rm{e}}PS^2_0)$ and
$B_{\rm{i}\rm{e}}:=({\mu_0}/{\mu_1})(N_{1,\rm{i}\rm{e}}P+{\rm i}k^2_1M_{1,\rm{i}\rm{e}}RS^2_0)$.
It is easy to see that $f_1\in T^2(\pa D)$ and $f_2\in T^2(\pa D)$
and $f_3\in C^{0,\al}_d(\pa D_b)$ with $\al\in (0,1)$.
Further, from the identity $\dive_{\pa D}(\nu\times\curl E^i)=-\nu\cdot\curl^2 E^i$ it follows
that $f_2\in T^2_d(\pa D)$. Here,
\ben
T^2(\pa D)&:=&\{\textbf{u}\in L^2(\pa D)^2:\;\nu\cdot\textbf{u}=0\},\\
T^2_d(\pa D)&:=&\{\textbf{u}\in L^2(\pa D)^2:\;\nu\cdot\textbf{u}=0,\;
      \dive_{\pa D}\textbf{u}\in L^2(\pa D)\},\\
C^{0,\al}_d(\pa D_b)&:=&\{\textbf{u}\in C^{0,\al}(\pa D_b):\;\nu\cdot\textbf{u}=0,\;
                                  \dive_{\pa D_b}\textbf{u}\in C^{0,\al}(\pa D_b)\}.
\enn
Since $\pa D\in C^2$, it follows from \cite{K89} or \cite{P0} that
the system (\ref{3.201})-(\ref{3.211a}) is of Fredholm type in the space
$T^2(\pa D)\times T^2_d(\pa D)\times C^{0,\al}_d(\pa D_b)$. This, together with the uniqueness
of the transmission problem (TP1), implies that the system (\ref{3.201})-(\ref{3.211}) has a unique solution
$(a,b,c)^T\in T^2(\pa D)\times T^2_d(\pa D)\times C^{0,\al}_d(\pa D_b)$ with the estimate
\be\label{3.221}
\|a\|_{T^2(\pa D)}+\|b\|_{T^2_d(\pa D)}+\|c\|_{C^{0,\al}_d(\pa D_b)}
\leq C(\|f_1\|_{T^2(\pa D)}+\|f_2\|_{T^2_d(\pa D)}+\|f_3\|_{C^{0,\al}_d(\pa D_b)}).
\en
We now split $G_1$ into three parts $G_1^{(1)}$, $G_1^{(2)}$ and $G_1^{(3)}$, given by
\ben
G_1^{(1)}(x)&=&\curl\int_{\pa D}a(y)\Phi_1(x,y)ds(y),\quad x\in D\se\ov{D_b},\\
G_1^{(2)}(x)&=&\curl\curl\int_{\pa D}b(y)\Phi_1(x,y)ds(y),\quad x\in D\se\ov{D_b},
\enn
and $G_1^{(3)}:=G_1-\mu_1G_1^{(1)}-G_1^{(2)}$ in $D\se\ov{D_b}$.
Then $G_1=\mu_1G_1^{(1)}+G_1^{(2)}+G_1^{(3)}$. Moreover, by the properties of
$M_{1,\rm{t}\rm{t}}$ and $N_{1,\rm{t}\rm{t}}$ for ${\rm t=e,i}$, we have
\ben
&&\curl\curl G_1^{(1)}-k^2_1G_1^{(1)}=0\;\;\text{in}\;\;D,\quad\nu\times G_1^{(1)}\in T^2(\pa D),\\
&&\curl\curl G_1^{(2)}-k^2_1G_1^{(2)}=0\;\;\text{in}\;\;D,\quad\nu\times G_1^{(2)}\in T^2_d(\pa D),\\
&&\curl\curl G_1^{(3)}-k^2_1G_1^{(3)}=0\;\;\text{in}\;\;\R^3\se\ov{D_b},\quad
    \nu\times G_1^{(3)}\in C^{0,\al}_d(\pa D_b)\subset T^2_d(\pa D).
\enn
It is easy to see that $G_1^{(1)}\in L^2(D)$, $G_1^{(2)}\in H(\curl,D)$,
$G_1^{(3)}\in H_{\rm loc}(\curl,\R^3\se\ov{D_b})$ and
\be\label{3.231}
\|G_1^{(1)}\|_{L^2(D)^3}+\|G_1^{(2)}+G_1^{(3)}\|_{H(\curl,D\se\ov{D_b})}
\leq C(\|a\|_{T^2(\pa D)}+\|b\|_{T^2_d(\pa D)}+\|c\|_{C^{0,\al}_d(\pa D_b)}).
\en
Note that $f_1\in T^2_d(\pa D\se\ov{B_{z^*}})$. This, together with (\ref{3.201}) and the fact
that $M_{\rm{e}\rm{e}},M_{1,\rm{e}\rm{e}}$ and $(N_{\rm{e}\rm{e}}-N_{1,\rm{e}\rm{e}})P$
are bounded from $T^2(\pa D)$ and $T^2_d(\pa D)$ into $T^2_d(\pa D)$,
gives that $a\in T^2_d(\pa D\se\ov{B_{z^*}})\cap T^2(\pa D)$.
Choose a ball $B_1$ centered at $z^*$ with $B_1\subsetneq B_{z^*}$ and a cut-off function
$\chi\in C^\infty_0(\R^3)$ supported in $B_1$ with $\chi=1$ in a small neighborhood of $z^*$.
Then $G_1^{(1)}$ can be written in the form
\ben
G_1^{(1)}(x)=\curl\int_{\pa D}\Phi_1(x,y)[1-\chi(y)]a(y)ds
    +\curl\int_{\pa D}\Phi_1(x,y)\chi(y)a(y)ds.
\enn
Obviously, the first term belongs to $H(\curl,D)$ since $(1-\chi(y))a(y)\in T^2_d(\pa D)$,
and the second term belongs to $H(\curl,D\se\ov{B_1})$ since $B_1\subsetneq B_{z^*}$.
Combining (\ref{3.231}) and (\ref{3.221}) gives the required estimate (\ref{3.151})
in the case $k^2n(x)\equiv k_1^2$.

Step 2. For the general case $n\in L^\infty(D\se\ov{D_b})$, we consider the transmission problem (TP2):
\be\label{3.241}
\curl\curl W-k^2W=0 &&\text{in}\;\;\R^3\se\ov{D},\\ \label{3.251}
\curl\curl W_1-k^2n(x)W_1=g_1 &&\text{in}\;\;D\se\ov{D_b},\\ \label{3.261}
\nu\times W-\nu\times W_1=0 &&\text{on}\;\;\pa D,\\ \label{3.271}
\nu\times\curl W-\frac{\mu_0}{\mu_1}\nu\times\curl W_1=0 &&\text{on}\;\;\pa D\\ \label{3.271a}
\mathscr{B}_E(W_1)=0 && \text{on}\;\;\pa D_b
\en
with $W$ satisfying the radiation condition (\ref{1.9}),
where $g_1=g+(k^2n-k_1^2)\wid{G}_1\in L^2(D)^3$ and $(\wid{E}^s,\wid{G}_1)$ is a solution of
the transmission problem (IT1) with $k^2n(x)\equiv k_1^2$. By Step 1 we have
\be\label{3.151+}
\|\wid{G}_1\|_{L^2(D\se\ov{D_b})^3}+\|\wid{G}_1\|_{H(\curl,D\se\ov{D_b\cup B_{z^*}})}\leq C,
\en
where $C>0$ is a constant and independent of $z$.
By the variational method, it is easy to show that the problem (TP2) admits a unique solution
$(W,W_1)\in H_{\rm loc}(\curl,\R^3\se\ov{D})\times H(\curl,D\se\ov{D_1})$ with
\be\label{3.281}
\|W_1\|_{H(\curl,D\se\ov{D_b})}\leq C\|g_1\|_{L^2(D\se\ov{D_b})^3}
\le C(\|g\|_{L^2(D\se\ov{D_1})^3}+\|\wid{G}_1\|_{L^2(D\se\ov{D_b})^3}).
\en
Define $E^s|_{\R^3\se\ov{D}}:=\wid{E^s}+W$ and $G_1|_{D\se\ov{D_b}}:=\wid{G_1}+W_1$.
Then it is easy to check that $(E^s,G_1)$ is the unique solution of the problem (TP1).
The required estimate (\ref{3.151}) thus follows from (\ref{3.151+}) and (\ref{3.281}).
This completes the proof of the theorem.
\end{proof}

\begin{theorem}\label{thm3.2}
Assume that $\la_H=\mu_0/\mu_1=1$ and $n\in C^1(\ov{D})$. For $z^*\in\pa D$ let $B_{z^*}$ be a small
ball centered at $z^*$. Let $z\in B_{z^*}\cap(\R^3\se\ov{D})$ and
Let $(E,G)$ be the solution of the scattering problem $(\ref{1.6+})-(\ref{1.9+})$
corresponding to the incident magnetic dipole $E^i(x)=\curl(p\Phi(x,z))$.
Then $G\in L^p(D)$, $G^s:=G-E^i\in H(\curl,D)$ and
\ben
\|G\|_{L^p(D)}+\|G^s\|_{H(\curl,D)}\leq C\|\curl(p\Phi(x,z))\|_{L^p(D)}
\enn
for $6/5\leq p<3/2$, where $C>0$ is independent of $z$.
\end{theorem}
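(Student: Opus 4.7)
The plan is to adapt the proof of Theorem \ref{thm2.5} (the acoustic analog for the case $\la=1$) by deriving a Lippmann-Schwinger type volume integral equation for $G$ in $L^p(D)^3$, $6/5\le p<3/2$, and applying the Fredholm alternative together with uniqueness of the direct Maxwell scattering problem. Working in $L^p$ rather than $L^2$ is essential here: since $E^i(x)=\curl(p\Phi(x,z))$ has a $1/|x-z|^2$ singularity near $\pa D$, the norm $\|E^i\|_{L^2(D)}$ blows up as $z\to z^*\in\pa D$, whereas $\|E^i\|_{L^p(D)}$ remains finite only for $p<3/2$.

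First, since $\la_H=\mu_0/\mu_1=1$, the transmission conditions (\ref{1.8}) collapse to $\nu\times E=\nu\times G$ and $\nu\times\curl E=\nu\times\curl G$ on $\pa D$, while in $D$ the system (\ref{1.7}) yields both $\curl\curl G-k^2nG=0$ and $\dive(nG)=0$. With $n\in C^1(\ov D)$ and the identity $\curl\curl=\nabla\dive-\Delta$, this gives the vector Helmholtz-type equation
\[
\Delta G+k^2 G=k^2(1-n)G-\nabla\bigl((\nabla n/n)\cdot G\bigr)\quad\text{in }D.
\]
Applying the scalar free-space representation for $\Delta+k^2$ componentwise, together with the Silver-M\"uller condition for $E^s$ and the transmission conditions on $\pa D$, produces a Lippmann-Schwinger equation $(I+T)G=E^i$ in $L^p(D)^3$, where (up to layer potentials of the same or milder type) $T=T_1+\nabla T_2$ with
\[
(T_1\varphi)(x)=\int_D\Phi(x,y)\alpha(y)\varphi(y)\,dy,\qquad (T_2\varphi)(x)=\int_D\Phi(x,y)\beta(y)\cdot\varphi(y)\,dy,
\]
for bounded coefficients $\alpha,\beta$ built from $n,\nabla n$. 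Any boundary term coming from integration by parts across $\pa D$ takes the form of a curl-free single-layer potential and can be absorbed into $\nabla T_2$.

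Second, since $T_1:L^p(D)^3\to W^{2,p}(D)^3$ by \cite[Theorem 9.9]{DT} and $\nabla T_2:L^p(D)^3\to W^{1,p}(D)^3$, the operator $T$ is compact on $L^p(D)^3$ by Rellich-Kondrachov. The homogeneous equation $(I+T)G=0$ corresponds, by reversing the derivation, to a solution of the homogeneous Maxwell transmission scattering problem (\ref{1.6})--(\ref{1.9}), so uniqueness of the direct problem forces $G\equiv 0$. The Fredholm alternative then yields the uniform bound
\[
\|G\|_{L^p(D)^3}\le C\|E^i\|_{L^p(D)^3},
\]
with $C$ depending only on $k$, $n$ and $D$, hence independent of $z$.

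For the $H(\curl,D)$-estimate on $G^s=G-E^i=-TG$, the $W^{1,p}$-regularity of $T$ combined with the Sobolev embedding $W^{1,p}(D)\hookrightarrow L^2(D)$ (valid for $p\ge 6/5$ in $\R^3$ since $3p/(3-p)\ge 2$) gives $G^s\in L^2(D)^3$. The crucial structural point is that the gradient part $\nabla T_2 G$ is curl-free, so $\curl G^s=-\curl(T_1 G)$; since $T_1 G\in W^{2,p}(D)^3\hookrightarrow W^{1,2}(D)^3$ for $p\ge 6/5$, we obtain $\curl G^s\in L^2(D)^3$ with the required uniform bound. I expect the main obstacle to lie in the first step: correctly bookkeeping the boundary contributions in the Lippmann-Schwinger derivation when $n$ is not assumed constant near $\pa D$, and verifying that all arising operators split into a Newtonian-type piece $T_1$ (gaining two derivatives) and a curl-free gradient piece $\nabla T_2$ (gaining one derivative). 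This splitting is the structural feature that makes the curl analysis in the last step go through.
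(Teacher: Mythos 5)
Your proposal follows essentially the same route as the paper: the same Lippmann--Schwinger equation $(I+T_1-T_2)G=E^i$ with a Newtonian piece $T_1:L^p\to W^{2,p}$ and a curl-free gradient piece $T_2:L^p\to W^{1,p}$, compactness plus uniqueness of the direct problem to invoke the Fredholm alternative, and the identity $\curl\,\mathrm{grad}=0$ to get $\curl G^s=-\curl(T_1G)\in L^2$. The only difference is cosmetic: the paper obtains the integral equation directly from Green's theorem rather than via the vector Helmholtz reformulation, and does not dwell on the boundary bookkeeping you flag as a potential obstacle.
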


\begin{proof}
Since $\curl(p\Phi(x,z))$ satisfies the Maxwell equation $\curl\curl E^i-k^2E^i=0$ in $D$,
it follows from the Stratton-Chu formula \cite[Theorems 6.1 and 6.7]{CK} (cf. the proof of
Theorem 9.1 of \cite{CK}) that the scattering problem (\ref{1.6+})-(\ref{1.9+}) is equivalent
to the integral equation
\be\no
G(x)&=&E^i(x)-k^2\int_{D}\Phi(x,y)m(y)G(y)dy\\ \label{3.11}
    &&+\text{grad}\int_{D}\frac{1}{n(y)}\text{grad}n(y)\cdot G(y)\Phi(x,y)dy,\quad x\in D,
\en
where $m(y):=1-n(y)$. On the space $L^p(D)$, define the operators $T_1$ and $T_2$ by
\ben
(T_1\varphi)(x)&:=&k^2\int_{D}\Phi(x,y)m(y)\varphi(y)dy,\quad x\in D,\\
(T_2\varphi)(x)&:=&\text{grad}\int_{D}\frac{1}{n(y)}\text{grad}n(y)\cdot\varphi(y)\Phi(x,y)dy,
   \quad x\in D.
\enn
Then the integral equation (\ref{3.11}) can be rewritten in the form
\be\label{3.11+}
(I+T_1-T_2)G=E^i\quad\text{in}\;\;D.
\en
It follows from \cite[Theorem 9.9]{DT} that $T_1$ is bounded from $L^p(D)$ into $W^{2,p}(D)$
and $T_2$ is bounded from $L^p(D)$ into $W^{1,p}(D)$. Therefore, $T_1$ and $T_2$ are compact
operators in $L^p(D)$. This, together with the uniqueness of the scattering problem
(\ref{1.6+})-(\ref{1.9+}), implies that the operator $I+T_1-T_2$ is of Fredholm type with index zero.
The Fredholm alternative gives that the integral equation (\ref{3.11}) has a unique solution
$G\in L^p(D)$ with
\be\label{3.12}
\|G\|_{L^p(D)}\leq C\|E^i\|_{L^p(D)}.
\en
From (\ref{3.11}) it is easily seen that
\ben
G^s=(T_2-T_1)G\in W^{1,p}(D)\hookrightarrow L^2(D)
\enn
since $6/5\le p<3/2$. Further, by the identity $\curl\text{grad}=0$, we have
\ben
\curl G^s=-\curl(T_1G)\in W^{1,p}(D)\hookrightarrow L^2(D).
\enn
Thus we deduce that
\be\label{3.13}
\|G^s\|_{H(\curl,D)}\leq C\|G\|_{L^p(D)}\leq C\|E^i\|_{L^p(D)}.
\en
The proof is then completed by combining (\ref{3.12}) and (\ref{3.13}).
\end{proof}

\subsection{Determining the penetrable obstacle}

We first make use of Lemma \ref{le3.2} and Theorem \ref{thm3.1} to establish the global uniqueness result in
the inverse electromagnetic scattering problem of determining the penetrable obstacle $D$ under the assumption
that $\la_H=\mu_0/\mu_1\not=1$, disregarding its contents $n$ and $D_b$.

\begin{theorem}\label{thm3.3}
Assume that $\la_H=\mu_0/\mu_1\not=1$.
Given $k>0$, let $E^\infty(\wi{x};d;p)$, $\wid{E}^\infty(\wi{x};d;p)$ be the electric far-field
patterns with respect to the transmission scattering problem $(\ref{1.6})-(\ref{1.9})$ corresponding
to the penetrable obstacle $D$ with the refractive index $n\in L^\infty(D\se\ov{D_b})$ and the embedded obstacle
$D_b$, the penetrable obstacle $\wid{D}$ with the refractive index $\wid{n}\in L^\infty(\wid{D}\se\ov{\wid{D_b}})$,
respectively. If $E^\infty(\wi{x};d;p)=\wid{E}^\infty(\wi{x};d;p)$ for all $\wi{x},d\in\Sp^2$ and
all polarizations $p\in\R^3$, then $D=\wid{D}$.
\end{theorem}

\begin{proof}
Assume that $D\neq\wid{D}$. Without loss of generality,
choose $z^*\in\pa D\se\pa\wid{D}$ and define
\ben
z_j:=z^*+({\delta}/{j})\nu(z^*),\;\;j=1,2,\ldots
\enn
with a sufficiently small $\delta>0$ such that $z_j\in B$, where $B$ denotes
a small ball centered at $z^*$ and satisfies that $B\cap\ov{(\wid{D}\cup D_b)}=\emptyset$.
See Figure \ref{two-obstacles}.

It is easy to see that the transmission problem (\ref{1.6})-(\ref{1.9}) can be reformulated
in terms of the electric fields $E$ and $G$ as:
\be\label{3.14}
\curl\curl E-k^2E=0&&\text{in}\;\;\R^3\se\ov{D},\\ \label{3.15}
\curl\curl G-k^2n(x)G=0&&\text{in}\;\;D\se\ov{D_b},\\ \label{3.16}
\nu\ti E=\nu\ti G,\;\;\nu\ti\curl E=\la_H\nu\ti\curl G &&\text{on}\;\;\pa D,\\ \label{3.17}
\mathscr{B}_E (G)=0 && \mbox{on}\;\;\pa D_b
\en
with $E=E^i+E^s$ in $\R^3\se\ov{D}$ and the Silver-M\"{u}ller radiation condition
\be\label{3.18}
\lim_{|x|\rightarrow\infty}|x|[\curl E^s(x)\ti\wi{x}-ikE^s(x)]=0.
\en

Consider the incident magnetic dipole wave in the form
\ben
E^i_j(x)=\frac{\curl(p\Phi(x,z_j))}{\|\curl(p\Phi(x,z_j))\|_{L^2(\pa D)}}\qquad j=1,2,3,\ldots
\enn
with polarization $p\in\R^3$. Let $(E_j,G_j)$ and $(\wid{E_j},\wid{G_j})$ be the unique solution
of the transmission problem (\ref{3.14})-(\ref{3.18}) with respect to $D$ with refractive index
$n$, the embedded obstacle $D_b$ and to $\wid{D}$ with refractive index $\wid{n}$,
the embedded obstacle $\wid{D_b}$, respectively, corresponding to the incident magnetic dipole $E^i_j(x)$.
From the assumption $E^\infty(\wi{x};d;p)=\wid{E}^\infty(\wi{x};d;p)$ for
all $\wi{x},d\in\Sp^2$ and all polarizations $p\in\R^3$, and by using
Rellich's lemma and the denseness results, it follows that
\be\label{3.371}
E_j^s(x)=\wid{E_j^s}(x)\quad\text{in}\;\;\ov{G_0},
\en
where $G_0$ denotes the unbounded component of $\R^3\se\ov{(D\cup\wid{D})}$.

Since $z^*\in\pa D\se\wid{D}$ and $\pa D\in C^2$, we can choose a small $C^2$-smooth domain $D_0$
such that $B\cap D\subset D_0\subset D\se\ov{(\wid{D}\cup D_b)}$.
Let $E_0(j)=G_j$ and $F_0(j)=\wid{E_j}$ in $D_0$. Then $E_0(j)$ and $F_0(j)$ satisfy
the modified interior transmission problem (MITPM) with $\Om=D_0$ and
\ben
&&\xi_1(j):=(k^2n+1)G_j,\\
&&\xi_2(j):=(k^2+1)\wid{E_j},\\
&&h_1(j):=\nu\times\wid{E_j}-\nu\times G_j,\\
&&h_2(j):=\nu\times\curl\wid{E_j}-\frac{\mu_0}{\mu_1}\nu\times\curl G_j.
\enn
From (\ref{3.371}) and the transmission conditions on $\pa D$,
it is easily seen that $h_1(j)=h_2(j)=0$ on $\G_1:=\pa D_0\cap\pa D$.
Further, from the well-posedness of the transmission problem (\ref{1.6})-(\ref{1.9}), and
in view of the positive distance from $z^*$ to $\wid{D}$, we know that
\be\label{3.381}
\|\wid{E_j^s}\|_{H(\curl,D_0)}\leq C,
\en
where $C$ is independent of $j\in\N.$
Noting that $\|E^i_j\|_{L^2(D_0)}$ is uniformly bounded for all $j\in\N$, and by (\ref{3.381}),
we deduce that $\xi_2(j)$ is bounded in $L^2(D_0)$ uniformly for all $j\in\N.$
Moreover, by Theorem \ref{thm3.1} it follows that
\ben
\|G_j\|_{L^2(D_0)^3}+\|G_j\|_{H(\curl,D_0\se\ov{B_{z^*}})}\le C,
\enn
uniformly for all $j\in\N$, where $B_{z^*}$ is chosen such that $B_{z^*}\cap(D)\subsetneq D_0$.
Thus, $\xi_1(j)$ is bounded in $L^2(D_0)$ uniformly for $j\in\N$, and by (\ref{3.381})
$\wid{E_j}-G_j$ and $\wid{E_j}-\la_HG_j$ are bounded in $H(\curl,D_0\se\ov{B_{z^*}})$
uniformly for $j\in\N$. Then, by the trace theorem and the fact that $h_1(j)=h_2(j)=0$ on $\G_1$,
we have
\ben
\|h_1(j)\|_{Y(\pa D_0)}+\|h_2(j)\|_{Y(\pa D_0)}\leq C
\enn
uniformly for $j\in\N$.
Then, by Lemma \ref{le3.2} it is derived that
\ben
\|E_j^i\|_{H(\curl,D_0)}-\|\wid{E_j^s}\|_{H(\curl,D_0)}\le\|\wid{E_j}\|_{H(\curl,D_0)}
=\|F_0(j)\|_{H(\curl,D_0)}\le C.
\enn
Choosing $p=\nu(z^*)$ we have
\ben
\|\curl E_j^i\|_{L^2(D_0)}
&=&\frac{\|\curl\curl[\nu(z^*)\Phi(x,z_j)]\|_{L^2(D_0)}}{\|\curl[\nu(z^*)\Phi(x,z_j)]\|_{L^2(\pa D_0)}}\\
&\ge&\frac{\|\nabla\dive[\nu(z^*)\Phi(x,z_j)]\|_{L^2(D_0)}}{\|\curl[\nu(z^*)\Phi(x,z_j)]\|_{L^2(\pa D_0)}}
-\frac{\|k^2[\nu(z^*)\Phi(x,z_j)]\|_{L^2(D_0)}}{\|\curl[\nu(z^*)\Phi(x,z_j)]\|_{L^2(\pa D_0)}}:=I+II.
\enn
Obviously, the second term $II$ is uniformly bounded due to the boundedness of $\Phi(x,z_j)$
in $L^2(D_0)$. Without loss of generality, we assume that $z^*=(0,0,0)$ and $\nu(z^*)=(0,0,1)$
(in fact, other cases can be easily transformed into this one by a linear transformation).
Then we have
\ben
I^2=\int_{D_0}\frac{x_1^2+x_2^2}{|x-z_j|^8}dx\Big/\int_{\pa D_0}\frac{x_1^2+x_2^2}{|x-z_j|^6}ds(x)
=O(j^3),\qquad j\rightarrow\infty.
\enn
This implies that $\|E_j^i\|_{H(\curl,D_0)}\rightarrow\infty$ as $j\rightarrow\infty$.
However, this is a contradiction since $\|\wid{E_j^s}\|_{H(\curl,D_0)}$ is uniformly
bounded for all $j\in\N$. Therefore, we have $D=\wid{D}$.
\end{proof}

\begin{remark}\label{rm3.2}{\rm
The method can be applied to improve the uniqueness result in \cite{HP0} by relaxing
the smoothness requirement on the boundary $\pa D$ and the refractive index $n$
($\pa D\in C^2$ instead of $\pa D\in C^{2,\al}$ and $n\in C^1(\ov{D})$ instead of
$n\in C^{1,\al}(\ov{D})$, where $0<\al<1$) and by removing the assumption that
the refractive index $n$ is a constant near the boundary $\pa D$
and $\I(n(x_0))>0$ for some $x_0\in D$.
}
\end{remark}

\subsection{Determining the support of the inhomogeneous medium}

We now use Corollary \ref{co3.1} and Theorem \ref{thm3.2} to prove uniqueness in determining the support
$D$ of the inhomogeneous medium disregarding its contents provided the refractive index $n$ is differentiable
in $D$ and satisfies Assumption (A).

\begin{theorem}\label{thm3.4}
Assume that $n\in C^1(\ov{D})$ satisfies Assumption (A).
Given $k>0$, let $E^\infty(\wi{x};d;p)$ and $\wid{E}^\infty(\wi{x};d;p)$ be the electric far-field
patterns with respect to the scattering problem $(\ref{1.6+})-(\ref{1.9+})$ with the refractive indices
$n\in C^1(\ov{D})$ and $\wid{n}\in C^1(\ov{\wid{D}})$, respectively.
If $E^\infty(\wi{x};d;p)=\wid{E}^\infty(\wi{x};d;p)$ for all $\wi{x},d\in\Sp^2$ and
all polarizations $p\in\R^3$, then $D=\wid{D}$.
\end{theorem}

\begin{proof}
Assume that $D\neq\wid{D}$. Without loss of generality,
choose $z^*\in\pa D\se\pa\wid{D}$ and define
\ben
z_j:=z^*+({\delta}/{j})\nu(z^*),\;\;j=1,2,\ldots
\enn
with a sufficiently small $\delta>0$ such that $z_j\in B$, where $B$ denotes
a small ball centered at $z^*$ and satisfies that $B\cap\ov{\wid{D}}=\emptyset$
(cf. Figure \ref{two-obstacles}).

The scattering problem (\ref{1.6+})-(\ref{1.9+}) can be reformulated in terms of the electric field $E$ as:
\be\label{3.14+}
\curl\curl E-k^2n(x)E=0&&\text{in}\;\;\R^3,\\ \label{3.18+}
\lim_{|x|\rightarrow\infty}|x|[\curl E^s(x)\ti\wi{x}-ikE^s(x)]=0&&
\en
with $E=E^i+E^s$ in $\R^3$.

Consider the incident magnetic dipole wave
\ben
E^i_j(x)=\curl(p\Phi(x,z_j))\qquad j=1,2,3,\ldots,
\enn
with $p\in\R^3$. Let $E_j$ and $\wid{E_j}$ denote the unique solution to the scattering
problem (\ref{3.14+})-(\ref{3.18+}) with respect to the refractive indices $n$ and $\wid{n}$, respectively,
corresponding to the incident magnetic dipole $E^i_j(x)$.
Similarly as in the proof of Theorem \ref{thm3.3}, we have, from the assumption
$E^\infty(\wi{x};d;p)=\wid{E}^\infty(\wi{x};d;p)$ for all $\wi{x},d\in\Sp^2$ and all polarizations
$p\in\R^3$, that
\be\label{3.19}
E_j^s(x)=\wid{E_j^s}(x)\qquad\text{in}\;\;\ov{G_0},
\en
where $G_0$ denotes the unbounded component of $\R^3\se\ov{(D\cup\wid{D})}$.

Since $z^*\in\pa D\se\wid{D}$ and $\pa D\in C^2$, we can choose a small $C^2$-smooth domain $D_0$
such that $B\cap D\subset D_0\subset D\se\ov{\wid{D}}$.
Define $E_0(j):=\wid{E_j}$ and $F_0(j):=G_j$ in $D_0$. Then $(E_0(j),F_0(j))$ satisfies
the interior transmission problem (ITPM) with $\Om=D_0$ and
\ben
&&h_1(j):=\nu\times G_j-\nu\times\wid{E_j}\\
&&h_2(j):=\nu\times\curl G_j-\nu\times\curl\wid{E_j}.
\enn
From (\ref{3.19}) we see that $h_1(j)=h_2(j)=0$ in $\G_1:=\pa D_0\cap\pa D$.
Choose the cut-off function $\chi\in C^\infty_0(\R^3)$ such that
\ben
\chi(x)=\left\{
  \begin{array}{ll}
    0, & x\in\R^3\se\ov{B}, \\
    1, & x\in B_1.
  \end{array}
\right.
\enn
Here, $B_1\varsubsetneq B$ is a small ball centered at $z^*$.
Define $W(j):=[1-\chi](G_j-\wid{E_j})$.
Then it is easy to check that $\nu\times W(j)|_{\pa D_0}=h_1(j)$ and
$\nu\times\curl W(j)|_{\pa D_0}=h_2(j)$.

Since $z^*$ has a positive distance from $\wid{D}$, by the well-posedness of the scattering
problem (\ref{1.6+})-(\ref{1.9+}) (or (\ref{3.14+})-(\ref{3.18+})) we have
\be\label{3.20}
\|\wid{E_j^s}\|_{\mathcal{U}(D_0)}\leq C.
\en
On the other hand, by Theorem \ref{thm3.2} it follows that
\be\label{3.21}
\|G_j-E^i_j\|_{H(\curl,D_0)}+\|G\|_{L^p(D)}\le C\|\curl(p\Phi(x,z))\|_{L^p(D)}\le C_1
\en
for $6/5\le p<3/2$, where $C,C_1$ are independent of $j\in\N$.
This, combined with (\ref{3.20}), yields that
\be\label{3.22}
\|W(j)\|_{H(\curl,D_0)}\leq C.
\en
It remains to prove that $\curl\curl W(j)$ is uniformly bounded in $L^2(D)^3$ for $j\in\N$.
By a direct calculation, we find that
\ben
\curl\curl W(j)&=&\nabla a(x)\times\curl(G_j-\wid{E_j})+a(x)\curl\curl(G_j-\wid{E_j})\\
     &&+\curl[\nabla a(x)\times(G_j-\wid{E_j})]
\enn
with $a(x):=1-\chi(x)$.
From (\ref{3.20}) and (\ref{3.21}) it is seen that the first term
$\nabla a\times\curl(G_j-\wid{E_j})\in L^2(D_0)$.
From the Maxwell equations, it is found that
\be\label{3.23}
a(x)\curl\curl(G_j-\wid{E_j})=k^2a[n(G_j-E^i_j)-\wid{E_j^s}+(n-1)E_j^i]\in L^2(D_0)
\en
since $a|_{B_1}=0$ and $\|aE_j^i\|_{L^2(D_0)^3}\leq C$.
Further, we have
\ben
\curl[\nabla a(x)\times(G_j-\wid{E_j})]
=\nabla a\nabla\cdot(G_j-\wid{E_j})-(\nabla a\cdot\nabla)(G_j-\wid{E_j})\\
+[(G_j-\wid{E_j})\cdot\nabla]\nabla a-(G_j-\wid{E_j})\Delta a.
\enn
The estimates (\ref{3.20}) and (\ref{3.21}) imply that the third and forth terms on the right-hand side of
the above equation are uniformly bounded in $L^2(D_0)$ for $j\in\N$.
For the first and second terms, since $\nabla a|_{B_1}=0$, we only need to show that
$\nabla(G_j-\wid{E_j})$ and $\nabla\cdot(G_j-\wid{E_j})$ are uniformly bounded in $L^2(D_0\se\ov{B_1})$.
First, from (\ref{3.11+}) it is noted that
\ben
G_j-E^i_j=T_2G_j-T_1(G_j-E^i_j)-T_1E_j^i,
\enn
where $T_1,T_2$ are defined just after (\ref{3.11}).
Obviously, $T_2G_j\in W^{2,p}(D)\hookrightarrow H^1(D)$ and $T_1G_j^s\in H^1(D)$
since, by (\ref{3.21}), $G_j\in L^p(D)$ with $6/5\le p<3/2$ and $(G_j-E^i_j)\in L^2(D)$.
For $x\in D_0\se\ov{B_1}$, it is easy to see that $\|T_1E_j^i\|_{H^1(D\se \ov{B_1})}\leq C$.
Then we have
\ben
\|G_j-E^i_j\|_{H^1(D\se\ov{B_1})}\leq C.
\enn
This, together with the estimate $\|\wid{E_j^s}\|_{H^1(D_0)}\leq C$, implies that
$\nabla(G_j-\wid{E_j})$ and $\nabla\cdot(G_j-\wid{E_j})$ are uniformly bounded in $L^2(D_0\se\ov{B_1})$,
and then
\ben
\curl[\nabla a(x)\times(G_j-\wid{E_j})]\in L^2(D_0)
\enn
uniformly for $j\in\N$. Therefore, we have that $\|\curl\curl W(j)\|_{L^2(D_0)}\le C$,
so
\be\label{3.24}
\|(h_1(j),h_2(j))\|_{\tau(\pa D_0)}\le\|W(j)\|_{\mathcal{U}(D_0)}\le C,
\en
where $C$ is independent of $j\in\N$.

Similarly as in the acoustic case, $D_0$ can be chosen so that
$k^2<\min\{\la_1(D_0),\la_1(D_0)/\sup(n)\}$.
It then follows from (\ref{3.24}) and Lemma \ref{le3.1} that
$$
\|\wid{E_j}\|_{L^2(D_0)}=\|E_0(j)\|_{L^2(D_0)}\le C_1\|(h_1(j),h_2(j))\|_{\tau(\pa D_0)}\le C
$$
uniformly for $j\in\N.$ Therefore, we have
\ben
\|\curl(p\Phi(\cdot,z_j))\|_{L^2(D_0)}
-\|\wid{E_j^s}\|_{L^2(D_0)}\le\|\wid{E_j}\|_{L^2(D_0)}\le C
\enn
uniformly for $j\in\N$. This is a contradiction since
$\|\curl(p\Phi(\cdot,z_j))\|_{L^2(D_0)}\rightarrow\infty$ as $j\rightarrow\infty$
and $\|\wid{E_j^s}\|_{L^2(D_0)}$ is bounded uniformly for $j\in\N$.
We then have $D=\wid{D}$. The proof is thus completed.
\end{proof}

\begin{remark}\label{rm3.3}{\rm
If $n\in C^{2,\al}_0(\R^3)$ with $0<\al<1$, then it was proved in \cite{CP} that $n$ (and therefore $D$) 
can be uniquely determined by the electric far-field patterns $\wid{E}^\infty(\wi{x};d;p)$ for all 
$\wi{x},d\in\Sp^2$ and all polarizations $p\in\R^3$. However, the result in \cite{CP} is not applicable to
the case in this paper. 
}
\end{remark}

\section*{Acknowledgements}

The work was partly supported by NNSF of China under grants 91430102, 91630309, 11401568, 11501558,
the China Postdoctoral Science Foundation under grants 2015M580827 and 2016T90900
and the National Center for Mathematics and Interdisciplinary Sciences, CAS.


\end{document}